\documentclass[12pt]{amsart}
\linespread{1}
\usepackage {amssymb,latexsym}
\usepackage [all]{xy}
\usepackage{nohyperref}
\usepackage[pageref]{backref}

\oddsidemargin=-.05in

\evensidemargin=-.05in

\textwidth=6.4in

\topmargin=-.1in

\textheight=9.0in

\parindent=0.25in


\newtheorem{theorem}{Theorem}[section]
\newtheorem{lemma}[theorem]{Lemma}
\newtheorem {proposition}[theorem]{Proposition}
\newtheorem {corollary} [theorem] {Corollary}

\theoremstyle{definition}
\newtheorem{definition}[theorem]{Definition}
\newtheorem{example}[theorem]{Example}

\newtheorem{remark}[theorem]{Remark}

\numberwithin {equation}{section}

\renewcommand*{\backref}[1]{}
\renewcommand*{\backrefalt}[4]{%
    \ifcase #1 (Not cited.)%
    \or        (Cited on page~#2.)%
    \else      (Cited on pages~#2.)%
    \fi
    }

\def\cs{{$C^{\ast}$}}
\def\ra{{\rightarrow}}

\def\q{{\mathbb{Q}}}
\def\z{{\mathbb{Z}}}
\def\n{{\mathbb{N}}}
\def\c{{\mathbb{C}}}
\def\r{{\mathbb{R}}}

\def\csr{{C^{\ast}_r}}

\def\H{{\mathcal{H}}}

\def\gh{{(G,H)}}
\def\ghr{{(G_r,H_r)}}
\def\ghpr{{(G',H')}}

\def\f{{\varphi}}

\def\gb{{\overline{G}}}
\def\hb{{\overline{H}}}
\def\ghb{{(\gb,\hb)}}
\def\fh{{_Hf_H}}

\def\la{{\lambda}}

\def\d{{\delta}}
\def\D{{\Delta}}
\def\xx{{\mathfrak{X}}}

\def\wrt{{with respect to }}
\def\G{{\mathcal{G}}}

\def\ba{{\backslash}}
\def\inv{{^{-1}}}
\def\s{{^{\ast}}}

\def\hg{{H\backslash G}}
\def\shat{{\widehat{S}}}
\begin{document}
\title{Length functions and property (RD) for locally compact Hecke pairs}

\author{Vahid Shirbisheh}
\email{shirbisheh@gmail.com}


\keywords{Locally compact Hecke pairs, Hecke \cs-algebras, relative unimodularity, totally disconnected locally compact groups, the Schlichting completion, compact subgroups, length functions, property (RD), growth, amenability.}

\begin{abstract}
The purpose of this paper is to study property (RD) for locally compact Hecke pairs. We discuss length functions on Hecke pairs and the growth of Hecke pairs. We establish an equivalence between property (RD) of locally compact groups and property (RD) of certain locally compact Hecke pairs. This allows us to transfer several important results concerning property (RD) of locally compact groups into our setting, and consequently to identify many classes of examples of locally compact Hecke pairs with property (RD). We also show that a reduced discrete Hecke pair $\gh$ has (RD) if and only if its Schlichting completion $\gb$ has (RD). Then it follows that the relative unimodularity is a necessary condition for a discrete Hecke pair to possess property (RD).
\end{abstract}
\maketitle

\section {Introduction}
\label{sec:INTRO}

In noncommutative geometry, Hecke pairs and their associated Hecke \cs-algebras first appeared in the work of Jean-Beno\^{\i}t Bost and Alain Connes in \cite{bc}. Afterwards, various aspects of these objects were studied by numerous authors, see for instance \cite{BCH, hall, klq, tzanev}. We started the study of property (RD) (Rapid Decay) for Hecke pairs in \cite{s1}, see also \cite{s2}. More recently, amenability, weak amenability, Haagerup property (a-T-menability) and property (T) of Hecke pairs have also been studied in \cite{anan,larsen-palma, s3}. In all these works, Hecke pairs have been considered as a generalization of quotient groups, and so capable of many notions and constructions that had been originally invented for groups. In order to delve deeply into the nature of discrete Hecke pairs and Hecke \cs-algebras, one needs to study certain pairs of locally compact groups. With this point of view, Kroum Tzanev, S. Kaliszewski, Magnus B. Landstad, and John Quigg studied a certain completion process of Hecke pairs known as the Schlichting completion in \cite{klq, tzanev}. On the other hand, in \cite{s4}, we observed that many other pairs of locally compact groups also admit a \cs-algebraic formulation, and therefore we developed a generalized setting to study locally compact Hecke pairs. This generalization gave rise to certain results about discrete Hecke pairs too. For instance, we proved that the left regular representation of a discrete Hecke pair is a bounded homomorphism. The main purpose of the present paper is to study the applications of our extended setting of locally compact Hecke pairs in our investigation of property (RD) for these pairs. Our main achievements in this paper are (1) discovering a relationship between property (RD) of locally compact groups and property (RD) of locally compact Hecke pairs, and (2) using the Schlichting completion to relate property (RD) of discrete Hecke pairs to property (RD) of locally compact Hecke pairs. Therefore our work not only applies to non-discrete Hecke pairs, but also it produces several results about property (RD) of discrete Hecke pairs. Besides the main theorems, an important conclusion of our results is that discrete Hecke pairs demonstrate certain features of locally compact groups. Therefore any through study of this objects naturally falls beyond the mere algebraic theory of groups.

As it is explained in \cite{s1}, property (RD) is a basic concept in noncommutative geometry and harmonic analysis which has been defined originally for groups. We showed in \cite{s1} that this property can be defined for Hecke pairs too, and similarly it can be applied to define smooth subalgebras in reduced Hecke \cs-algebra. We refer the reader to \cite{s1} for more details about the importance of studying property (RD) for Hecke pairs and we proceed with an outline of the content of the present paper.

The definition of property (RD) is based on the notion of length functions on groups and on Hecke pairs. Length functions on groups are usually defined by means of isometric free actions of groups on metric spaces. When the action is not free and the stabilizer of a point is a Hecke subgroup of the group, again one can use the action to define a length function on the associated Hecke pair. In Section \ref{sec:lengthfunc}, we explain various constructions and properties of length functions on Hecke pairs. For instance, we show why there always exist proper and locally bounded length functions (analogues to the word length functions on groups) on a given finitely generated discrete Hecke pair. As another conclusion of our discussion, using the Schlichting completion of Hecke pairs, we improve Theorem 2.2 of \cite{s2} which states that property (RD) of discrete Hecke pairs is stable under commensurability of subgroups, see Corollary \ref{cor:commen-RD}.
 
In Section \ref{sec:generalRD}, after defining property (RD) for locally compact Hecke pairs, we discuss this property for Hecke pairs $\gh$ in which $H$ is a cocompact subgroup of $G$. Next we give equivalent definitions of property (RD). Afterwards, we prove the main theorem of this paper, which asserts that when $H$ is a compact subgroup of a locally compact group $G$, the Hecke pair $\gh$ has (RD) if and only if the group $G$ has (RD), see Theorem \ref{thm:compact-sub-RD}. This theorem enables us to transfer many results concerning property (RD) of locally compact groups into our setting, and consequently it expands the classes of examples of Hecke pairs possessing property (RD), see Remark \ref{rem:RD-groups}. Combining this theorem with the Schlichting completion of discrete Hecke pairs, we show that a discrete Hecke pair $\gh$ has (RD) if and only if the totally disconnected locally compact group $\gb$ appearing in the Schlichting completion of $\gh$ has (RD), see Theorem \ref{thm:RD-schlich}. This is in line with the results of \cite{tzanev, anan}, which assert similar statements for amenability and Haagerup property of discrete Hecke pairs. It also helps to find more examples of discrete Hecke pairs possessing property (RD) using rapid decay locally compact groups, see for instance Example \ref{exa:sl2-z1p}. As another application of Theorem \ref{thm:RD-schlich}, we conclude that the relative unimodularity of a discrete Hecke pair $\gh$ is a necessary condition for $\gh$ to possess property (RD). Regarding the similar result for locally compact groups, see Theorem 2.2 of \cite{JiSch}, this shows that even discrete Hecke pairs behave more like locally compact groups. Therefore it supports the idea of studying discrete Hecke pairs in the general context of locally compact groups, see also \cite{s4}. Also it gives us an easy criterion to determine many discrete Hecke pairs which do not have (RD), for example the Bost-Connes Hecke pair, see Example \ref{exa:BC-HP-notRD}.

The last section is devoted to the study of growth rates of Locally compact Hecke pairs and its relationship with property (RD) and amenability. Similar to the case of groups, we observe that for amenable finitely generated discrete Hecke pairs, property (RD) is equivalent to polynomial growth.

In the rest of this section, we recall some notations and results from \cite{s4} concerning locally compact Hecke pairs which are needed for our study. In what follows, $G$ is a locally compact group (possibly discrete) equipped with a right Haar measure $\mu$ and $H$ is a closed subgroup of $G$ equipped with a right Haar measure $\eta$. We denote the modular functions of $G$ and $H$ by $\Delta_G$ and $\Delta_H$, respectively. A pair $\gh$ is called {\it discrete} if the homogeneous space $\hg$ is a discrete space, otherwise it is called {\it non-discrete}. For non-discrete pairs, we assume that $\Delta_G|_H=\Delta_H=1$. Thus there always exists a right $G$-invariant measure $\nu$ on the homogeneous space $H\ba G$ such that the Weil's formula holds:
\[
\int_G f(x)d\mu(x)=\int_{\hg } \int_H f(hy) d\eta(h) d\nu(y), \qquad \forall f\in C_c(G).
\]
The Hecke algebra associated to a (discrete or non-discrete) pair $\gh$ is made of the vector space $\H\gh$ of all compact support continuous complex functions on $\hg$ which are right $H$-invariant, i.e. $f:\hg \to \c$ such that $f(Hxh)=f(Hx)$ for all $x\in G$ and $h\in H$. The multiplication in $\H\gh$ is defined by the following convolution like product:
\[
f\ast g (Hx):=\int_{\hg} f(Hxy\inv)g(Hy)d\nu(Hy),\quad \forall f,g \in \H\gh, Hx\in \hg.
\]

Given a discrete pair $\gh$, for every $x\in G$, we set $L(x):=[H:H_x]$, where $H_x=H\cap xHx\inv$ and $R(x):=L(x\inv)$. Then the pair $\gh$ is called a {\it discrete Hecke pair} if $L(x)< \infty$ for all $x\in G$, equivalently we also say that $H$ is a {\it Hecke subgroup of $G$}. One notes that $L(x)$ (resp. $R(x)$) is the number of distinct left (resp. right) cosets of $H$ in the double coset $HxH$. Equivalently, a discrete pair $\gh$ is a discrete Hecke pair if and only if the characteristic function of every double coset $HxH$ for $x\in G$ belongs to $\H\gh$. In this case $\H\gh$ can also be interpreted as the algebra of all finite support complex functions on the set $G//H$ of all double cosets of $H$ in $G$. The function $\Delta_\gh: G\to \q^+$ defined by $\Delta_\gh (g):= \frac{L(g)}{R(g)}$ is a group homomorphism and is called {\it the relative modular function of the Hecke pair $\gh$}. The involution on the Hecke algebra $\H\gh$ is defined by $f\s(Hx):= \Delta_\gh (x\inv) \overline{f(Hx\inv)}$ for all $Hx\in \hg$. The left regular representation $\la: \H\gh\to B(\ell^2(\hg))$ is defined by $\la(f)(\xi):=f\ast \xi$ for all $f\in \H\gh$ and $\xi\in \ell^2(\hg)$. It was shown in Theorem 5.4 of \cite{s4} that $\la$ is a bounded homomorphism for all discrete Hecke pairs.

By definition a non-discrete pair $\gh$ is called a {\it non-discrete Hecke pair} if (1) the left regular representation $\la: \H\gh\to B(L^2(\hg))$ is a well defined homomorphism, and (2) for every $Hx\in \hg$, there is some $f\in \H\gh$ such that $f(Hx)\neq 0$. We define the involution of $\H\gh$ by setting $f\s(Hx):= \Delta_G (x\inv) \overline{f(Hx\inv)}$ for all $Hx\in \hg$. It was shown in Theorem 4.1 and Corollary 6.2 of \cite{s4} that when $H$ is a compact or cocompact subgroup of a locally compact group $G$, the pair $\gh$ is a Hecke pair. Also, by Theorem 4.1 of \cite{s4}, the left regular representation $\la$ is a bounded homomorphism when $H$ is a compact subgroup of $G$. The definition of non-discrete Hecke pairs is flexible enough to include discrete Hecke pairs of the form $\gh$, where either $H$ is a Hecke subgroup of discrete group $G$ or $H$ is a compact open subgroup of a locally compact group $G$. Other discrete Hecke pairs $\gh$ also can be included in the definition of non-discrete Hecke pairs if we ignore their topology and consider $G$ and $H$ simply as discrete groups.

Given a (discrete or non-discrete) Hecke pair $\gh$, the norm closure of the image of the left regular representation in $B(L^2(\hg))$ is called the {\it reduced Hecke \cs-algebra of the Hecke pair $\gh$} and is denoted by $C_r^\ast \gh$. The following is parts of Proposition 2.13 of \cite{s4} and is applied to realize more Hecke pairs by means of reduction:

\begin{proposition}
\label{prop:normalcomponent}
Let $\gh$ be a pair and let $N$ be a normal closed subgroup of $G$ contained in $H$. Set $G':=\frac{G}{N}$ and $H':=\frac{H}{N}$.
\begin{itemize}
\item [(i)] Assume that the pair $\gh$ satisfies the assumptions $\Delta_G|_H=\Delta_H=1$, then the pair $\gh$ is a Hecke pair if and only if the pair $(G',H')$ is a Hecke pair. In this case, the Hecke algebras $\H\gh$ and $\H(G',H')$ are isomorphic.
\item[(ii)] With the assumptions of item (iii), the left regular representation of $\H\gh$ is bounded if and only if the left regular representation of $\H(G',H')$ is bounded. Moreover, the \cs-algebras $\csr\gh$ and $\csr(G',H')$ are isomorphic.
\end{itemize}
\end{proposition}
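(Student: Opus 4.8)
The plan is to show that the quotient map $\pi\colon G\to G'=G/N$ induces a structure-preserving identification of the two Hecke algebras, and then to promote it to a spatial isomorphism of the reduced Hecke \cs-algebras. First I would record the basic geometric fact underlying everything: because $N$ is normal in $G$ and contained in $H$, the assignment $Hx\mapsto H'\pi(x)$ is a well-defined homeomorphism $\Phi\colon \hg\to H'\ba G'$. Well-definedness and injectivity use $N\subseteq H$ (so that $HN=H$ and $\pi(xy\inv)\in H'$ forces $xy\inv\in H$), surjectivity is clear from surjectivity of $\pi$, and continuity in both directions follows from the quotient topologies. The same computation shows that $\Phi$ intertwines the right translation action of $H$ on $\hg$ with that of $H'=\pi(H)$ on $H'\ba G'$, and more generally intertwines the right $G$-action with the right $G'$-action through $\pi$. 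Consequently $\Phi$ restricts to a linear bijection between right-invariant compactly supported continuous functions, and the invariant measure transports: setting $\nu':=\Phi_{\ast}\nu$ produces a right $G'$-invariant measure on $H'\ba G'$, so that $\ghpr$ again carries the data needed to define its Hecke algebra.

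With $\Phi$ and $\nu'$ in hand, I would define the candidate isomorphism $\Psi\colon \H\gh\to\H\ghpr$ essentially by pullback, $\Psi(f)=f\circ\Phi\inv$ (up to a modular correction discussed below), and verify the two algebraic identities. Multiplicativity for the convolution is a direct change of variables: since $\nu'=\Phi_{\ast}\nu$, the integral defining $\Psi(f)\ast\Psi(g)$ over $H'\ba G'$ pulls back to the integral defining $f\ast g$ over $\hg$. For the involution I would compare $\Delta_G$ with $\Delta_{G'}\circ\pi$ on $\hg$; since $\Delta_G|_N=1$ (as $N\subseteq H$ and $\Delta_G|_H=1$), $\Delta_G$ factors through $\pi$ as a character of $G'$, and the point is to match it with the $\Delta_{G'}$ appearing in the involution of $\H\ghpr$.

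For part (i) itself, in the discrete case the Hecke condition transfers because the indices agree: using $N\subseteq H_x:=H\cap xHx\inv$ and $\pi(A)\cap\pi(B)=\pi(A\cap B)$ when $N\subseteq A\cap B$, one gets $H'_{x'}=\pi(H_x)$ and hence $[H:H_x]=[H':H'_{x'}]$ for $x'=\pi(x)$, so $L(x)=L'(x')$ is finite for all $x$ iff the same holds on $G'$; since $\Phi$ is a homeomorphism, $\gh$ is discrete iff $\ghpr$ is. In the non-discrete case the defining conditions---well-definedness of $\la$ and separation of points by $\H\gh$---transfer immediately because $\Psi$ is an isomorphism of the underlying convolution algebras and $\Phi$ is a homeomorphism. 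Part (ii) is then obtained spatially: the measure identification $\nu'=\Phi_{\ast}\nu$ makes $U\xi:=\xi\circ\Phi\inv$ (again up to a modular density) a unitary $L^2(\hg)\to L^2(H'\ba G')$, and one checks $U\la(f)U\s=\la'(\Psi(f))$ on the dense subspace $\H\gh$. Hence $\la(f)$ is bounded iff $\la'(\Psi(f))$ is, and passing to norm closures the map $U(\cdot)U\s$ carries $\csr\gh$ isomorphically onto $\csr\ghpr$.

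The main obstacle I expect is the modular bookkeeping that makes the involutions correspond. Unlike the convolution, which matches under the plain pullback once the measures are identified, the involution sees $\Delta_G$ on one side and $\Delta_{G'}$ on the other, and these need not coincide under $\pi$ when $G\to G'$ fails to preserve unimodularity of the relevant pieces; the $ax+b$ group already shows that $\Delta_G$ and $\Delta_{G'}\circ\pi$ can differ by a nontrivial character $\delta=\Delta_G/(\Delta_{G'}\circ\pi)$. Resolving this is exactly where the standing hypotheses $N\subseteq H$ and $\Delta_G|_H=\Delta_H=1$ are used: they force $\Delta_G|_N=1$, pin down how $\Delta_{G'}$ and $\Delta_{H'}$ behave on $H'$, and ultimately determine the modular density by which $\Psi$ (and correspondingly $U$) must be twisted so that the involution and the convolution are preserved simultaneously. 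Getting this correction consistent---so that it descends to the homogeneous space and leaves convolution intact---is the delicate step; everything else is formal transport along the homeomorphism $\Phi$.
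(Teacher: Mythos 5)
A preliminary remark: the paper never actually proves this proposition --- it is imported as ``parts of Proposition 2.13 of \cite{s4}'', which is also why item (ii) refers to a nonexistent ``item (iii)'' --- so your attempt can only be measured against the statement itself. Your skeleton is the natural one and is surely what the source does: the homeomorphism $\Phi:\hg\to H'\ba G'$, the pushforward measure $\nu'=\Phi_\ast\nu$, the pullback $\Psi$, the coset-index bookkeeping $L(x)=L'(\pi(x))$ in the discrete case, and the unitary $U\xi=\xi\circ\Phi\inv$ are all correct. Note that for part (ii) this already suffices with \emph{no} modular correction anywhere: since $U\la(f)U\s=\la'(f\circ\Phi\inv)$ and adjoints in $B(L^2)$ are intrinsic to the Hilbert spaces, conjugation by $U$ carries $\la(\H\gh)$ onto $\la'(\H\ghpr)$ and hence $\csr\gh$ $\ast$-isomorphically onto $\csr\ghpr$.

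The genuine gap is in part (i), exactly at the step you yourself call delicate and then skip. You assert that the hypotheses $N\subseteq H$ and $\D_G|_H=\D_H=1$ ``ultimately determine the modular density'' twisting $\Psi$ into a $\ast$-isomorphism, but you never exhibit it, and in fact the stated hypotheses cannot produce it. Set $\delta:=(\D_{G'}\circ\pi)\cdot\D_G\inv$. By the standard formula relating the modular functions of $G$ and $G/N$ to the module of the conjugation action of $G$ on $N$, $\delta(x)$ is the factor by which $n\mapsto xnx\inv$ scales the Haar measure of $N$; applying the same formula inside $H$ and using $\D_H=1$ gives $\delta|_H=\D_{H'}\circ\pi$. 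Hence your density $\delta^{-1/2}$ descends to a bi-$H$-invariant function on $G$ --- equivalently, your twisted $\Psi$ is even well defined on $\H\gh$ --- if and only if $H'=H/N$ is unimodular, and this does \emph{not} follow from $\D_G|_H=\D_H=1$. Concretely, take $G=(\r^2\rtimes_A\r)\times\r$ with $A_t=\mathrm{diag}(e^t,e^{-t})$, $H=(\r^2\rtimes_A\r)\times\{0\}$, and $N=\{((u,0),0,0):u\in\r\}$: all hypotheses hold, $\hg\cong\r$ is non-discrete, and both Hecke algebras are $C_c(\r)$ with convolution, yet $H'$ is the non-unimodular $ax+b$ group, $\delta|_H\neq 1$, and worse, $\D_{G'}$ is not constant on the cosets $H'x'$, so the involution formula $f\s(H'x')=\D_{G'}(x'\inv)\overline{f(H'x'\inv)}$ is not even well defined on $\H\ghpr$. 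The repair is to impose the paper's standing assumption $\D_{G'}|_{H'}=\D_{H'}=1$ on the quotient pair as well (presumably the content of the missing ``item (iii)''). Once that is assumed, $\delta|_H=1$, so $\delta$ descends to $\hg$; multiplication by $\delta^{-1/2}$ preserves convolution because $\delta(xy\inv)\delta(y)=\delta(x)$; and a one-line computation shows that $f\mapsto(\delta^{-1/2}f)\circ\Phi\inv$ intertwines the two involutions. Only with this extra hypothesis made explicit and this computation carried out does your argument for the $\ast$-algebra isomorphism in (i) close.
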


Given a discrete Hecke pair $\gh$, set $K_\gh:=\bigcap_{x\in G} xHx\inv$. Then $K_\gh$ is a normal (closed) subgroup of $G$ and the pair $(\frac{G}{K_\gh}, \frac{N}{K_\gh})$ is a discrete Hecke pair called the {\it reduction of $\gh$} and is denoted by $\ghr$. The Hecke pair $\gh$ is called {\it reduced} if it equals to its reduction, i.e. $ K_\gh=\{e\}$. The Schlichting completion is a process to associate a totally disconnected locally compact group $\gb$ with a given reduced discrete Hecke pair $\gh$ such that $G$ is embedded densely in $\gb$, and more importantly, the closure of $H$ in $\gb$, denoted by $\hb$, is a compact open subgroup of $\gb$. Then the discrete Hecke pair $\ghb$ is called the {\it Schlichting completion of $\gh$}.

\begin{lemma}
\label{lem:bijective-Schlich} (\cite{klq}, Proposition 4.9) Let $\gh$ be a reduced discrete Hecke pair and let $\ghb$ be its Schlichting completion. Then the following statements hold:
\begin{itemize}
\item[(i)] The mapping $\alpha: \hg\to \hb\ba\gb$ (resp. $\alpha':G/H\to \gb / \hb$), defined by $Hg\mapsto \hb g$ (resp. $gH\mapsto g\hb$) for all $g\in G$ is a $G$-equivariant bijection. In particular, $\alpha$ induces an isometric isomorphism between Hilbert spaces $\ell^2(\hg)$ and $\ell^2(\hb \ba \gb)$.
\item[(ii)] The mapping $\beta: G//H \to \gb// \hb$, defined by $HgH\mapsto \hb g\hb$ for all $g\in G$ is a bijection.
\item[(iii)] The mapping $\beta$ commutes with the convolution product, and therefore it induces an isometric isomorphism between the Hecke algebras $\H\gh$ and $\H\ghb$, with respect to the corresponding $\ell^1$-norms.
\end{itemize}
\end{lemma}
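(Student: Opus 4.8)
The plan is to reduce all three parts to two structural features of the Schlichting completion: that $\hb$ is \emph{open} in $\gb$ (so that every coset of $\hb$ is an open set), and that $\hb\cap G=H$ — the latter being a defining property of the construction, reflecting that $H$ is open, hence closed, in the topology $\gb$ induces on $G$. From $\hb\cap G=H$ I would first extract the coset identities $\hb g\cap G=Hg$ and $g\hb\cap G=gH$ for every $g\in G$: if $g\bar h\in G$ with $\bar h\in\hb$, then $\bar h=g\inv(g\bar h)\in G\cap\hb=H$, and symmetrically on the other side. These identities, together with density of $G$ and openness of $\hb$, are the engine for everything that follows.

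For (i), well-definedness and $G$-equivariance of $\alpha:Hg\mapsto\hb g$ are immediate, since $H\subseteq\hb$ and the action is right translation: $\alpha(Hgs)=\hb gs=(\hb g)s$. Injectivity is precisely the implication $\hb g=\hb g'\Rightarrow g'g\inv\in\hb\cap G=H$. For surjectivity I would note that each coset $\hb x$ with $x\in\gb$ is open, so by density of $G$ it meets $G$, furnishing a representative in $G$; the claim for $\alpha'$ is symmetric. Since $\alpha$ is then a bijection of discrete sets, it carries the canonical orthonormal basis of $\ell^2(\hg)$ onto that of $\ell^2(\hb\ba\gb)$ and so extends to an isometric isomorphism.

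The heart of (ii) is the double-coset identity $\hb g\hb\cap G=HgH$ for $g\in G$, which yields well-definedness, injectivity, and surjectivity of $\beta$ at once. The inclusion $HgH\subseteq\hb g\hb\cap G$ is clear. For the reverse I would decompose $\hb g\hb=\bigsqcup_{i=1}^{n}\hb y_i$ into its finitely many left $\hb$-cosets (finiteness holds because $\ghb$ is a discrete Hecke pair, $\hb$ being compact open), choosing each $y_i\in G$ by the density argument of (i); then $\hb g\hb\cap G=\bigsqcup_i(\hb y_i\cap G)=\bigsqcup_i Hy_i$. It remains to see that these are exactly the left cosets comprising $HgH$, which follows once I verify the index preservation $L(g)=[\hb:\hb\cap g\hb g\inv]$: the subgroup $\hb\cap g\hb g\inv$ is compact open in $\hb$ and satisfies $(\hb\cap g\hb g\inv)\cap G=H\cap gHg\inv$ by the coset identities, whereupon density and openness make $H/(H\cap gHg\inv)\to\hb/(\hb\cap g\hb g\inv)$ a bijection. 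I expect this index-preservation step to be the main technical obstacle, as it is the one place where compactness, openness, and density must be combined with care; granting it, $HgH$ exhausts all $n$ of the cosets $Hy_i$ and equality follows.

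For (iii) I would work in the vector-space basis given by the characteristic functions of double cosets. The convolution of two such basis elements is a nonnegative-integer combination of basis elements whose structure constants are counts of left cosets lying inside a fixed double coset and satisfying a translation condition; since $\alpha$ is a $G$-equivariant bijection respecting double cosets (by (i) and (ii)), these counts agree term by term for $\gh$ and $\ghb$, so $\beta$ is multiplicative. Isometry for the $\ell^1$-norms comes from the same preservation of indices, since $\|\mathbf 1_{HgH}\|_1=L(g)$ equals the number of left cosets in the double coset and is carried to $\|\mathbf 1_{\hb g\hb}\|_1=L(g)$.
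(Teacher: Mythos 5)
The paper offers no proof of this lemma at all: it is quoted, with attribution, from \cite{klq} (Proposition 4.9), so there is no internal argument to compare yours against; what you have done is supply a self-contained proof of a result the paper outsources. Judged on its own merits, your argument is correct. Everything is correctly reduced to three structural facts about the Schlichting completion --- $\hb$ is compact open in $\gb$, $G$ is dense in $\gb$, and $\hb\cap G=H$ --- and each step checks out: the coset identities $\hb g\cap G=Hg$ and $g\hb\cap G=gH$ give well-definedness and injectivity in (i), openness of cosets plus density give surjectivity, and your index-preservation step $L(g)=[\hb:\hb\cap g\hb g\inv]$ (injectivity of $H/(H\cap gHg\inv)\to\hb/(\hb\cap g\hb g\inv)$ from $(\hb\cap g\hb g\inv)\cap G=H\cap gHg\inv$; surjectivity because each coset $\bar h(\hb\cap g\hb g\inv)$ is open in $\gb$, hence meets $G$, and any such point lies in $\hb\cap G=H$), combined with the counting argument, does yield the double-coset identity $\hb g\hb\cap G=HgH$, which is indeed the heart of both (ii) and the structure-constant comparison in (iii). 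Two points deserve to be made explicit if you write this up. First, the premise $\hb\cap G=H$ is not contained in the paper's own summary of the completion (which says only that $\hb$ is the closure of $H$ and is compact open); it is part of the characterization of the Schlichting completion in \cite{klq} --- equivalently, in the concrete construction $\gb$ sits inside the permutation group of the coset space, the stabilizer of the base coset is an open subgroup meeting $G$ exactly in $H$, and $\hb$ lies inside that stabilizer --- and your whole proof genuinely needs it, so it should be cited or derived rather than asserted. Second, surjectivity of $\beta$ in (ii) is not a consequence of the identity $\hb g\hb\cap G=HgH$ alone (that identity gives well-definedness and injectivity); it needs the same density-plus-openness argument as in (i), applied to an arbitrary double coset $\hb x\hb$ with $x\in\gb$. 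Neither point is a gap in substance, only in bookkeeping.
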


Therefore the reduced Hecke \cs-algebras of $\gh$ and $\ghb$ are isomorphic. In the notation of the above lemma, let $\Gamma$ be a subgroup of $G$ containing $H$ and let $\overline{\Gamma}^G$ be its closure in $\gb$. Then the same correspondences hold between $H\ba \Gamma$ and $\hb\ba \overline{\Gamma}^G$ and between $\Gamma// H$ and $\overline{\Gamma}^G // \hb$. We conclude this section by an example for the Schlichting completion of a Hecke pair.

\begin{example}
\label{exa:SL2zp} Let $p$ be a prime number and consider the Hecke pair $(SL_2(\z [1/p]), SL_2(\z))$. Then $K_{(SL_2(\z [1/p]), SL_2(\z))}=\{I, -I\}$, where $I$ is the $2\times 2$ identity matrix. Thus the reduction of the above Hecke pair is $(PSL_2(\z [1/p]), PSL_2(\z))$. It was shown in Example 11.8 of \cite{klq} that the Schlichting completion of the latter Hecke pair is $(PSL_2(\q_p), PSL_2(\z_p))$ which is also the reduction of $(SL_2(\q_p), SL_2(\z_p))$. Therefore all these four Hecke pairs give rise to isomorphic Hecke algebras and isomorphic reduced Hecke \cs-algebras.
\end{example}

\section {Length functions on locally compact Hecke pairs}
\label{sec:lengthfunc}

The definition of property (RD) is based on certain geometric and analytic notions defined on groups. The geometric aspects of property (RD) rely on the notion of length functions on groups (and Hecke pairs). In this section we collect definitions and lemmas concerning length functions which are necessary for our study of property (RD).

A {\it length function on a locally compact group $G$} is a Borel function $l:G\ra [0,\infty[$ such that for all $g, h\in G$, we have
\begin{itemize}
\item $l(e)=0$,
\item $l(g)=l(g\inv)$, and
\item $l(gh)\leq l(g)+l(h)$.
\end{itemize}
Then the set
\[
N_l:=\{ g\in G; l(g)=0\}
\]
is a subgroup of $G$ and is called the {\it kernel of $l$}. The length function $l$ is called {\it closed} if its kernel is a closed subgroup of $G$. A {\it length function on a Hecke pair $\gh$} is a length function on $G$ such that $H\subseteq N_l$. In this case, $l$ is a bi-$H$-invariant function on $G$. It follows that when the Hecke pair $\gh$ is discrete, $l$ is a continuous function, and so a closed length function.

In \cite{j2}, a length function is assumed to be continuous. Following \cite{JiSch, chatpitsaloff}, we weaken this assumption by assuming $l$ to be only a Borel function in order to include word length functions associated with compact generating subsets of $G$.

\begin{definition}
\label{def:bounded-proper}
In the following $G$ is a locally compact group and $\gh$ is a Hecke pair.
\begin{itemize}
\item[(i)] A length function $l$ on $G$ (resp. $\gh$) is called {\it locally bounded} if $l$ is bounded on every compact subset of $G$ (resp. $\hg$).
\item[(ii)] A length function $l$ on $G$ (resp. $\gh$) is called {\it proper} if $l\inv([0,n])$ is relatively compact in $G$ (resp. $\hg$).
\item[(iii)] A subset $S$ of $G$ is called a {\it generating set of the group $G$} (resp. {\it the Hecke pair $\gh$}) if $G=\bigcup_{n\in\n} \shat^n$ (resp. $\hg=\bigcup_{n\in\n} H\shat^n$), where $\shat:= S\cup S\inv \cup\{e\}$. The group $G$ (resp. the Hecke pair $\gh$) is called {\it compactly generated} if it has a compact generating set. A compactly generated group (resp. Hecke pair) is called {\it finitely generated} if the topology is discrete.
\item[(iv)] Let $S$ be a generating set of $G$. The function
\[
l_S(g):= \left\{ \begin{array}{lcr} \min\{ n; g\in \shat^n\}&  & e\neq g\in G \\
0 & & g=e
\end{array} \right.
\]
is called the {\it word length function on $G$ associated with $S$}.
\item[(v)] Let $l_1$ and $l_2$ be two length functions on $G$ (resp. $\gh$). We say that {\it $l_1$ dominates $l_2$} if there are constants $c_0,c_1\geq 0$ such that $l_2(g)\leq c_1 l_1(g)+ c_0$ for all $g\in G$. If $l_1$ and $l_2$ dominate each other, we call them {\it equivalent}.
\end{itemize}
\end{definition}

For basic properties and characterizations of compactly generated groups, we refer to Section 2.C of \cite{cor-harpe} or Theorem 6.11 and Corollary 6.12 of \cite{stroppel}. For instance, every almost connected locally compact group is compactly generated. We recall that a locally compact group $G$ is called {\it almost connected}, if $G/G_0$ is a compact group, where $G_0$ is the connected component of the identity.

\begin{remark}
\label{rem:wordlength}
Given a compactly (or finitely) generated Hecke pair $\gh$ with a generating set $S$, we cannot imitate Definition \ref{def:bounded-proper}(iv) to define a word length function on $\gh$. As an alternative method, we can use length functions defined geometrically, as it is explained in the next remark. When the subgroup $H$ in the Hecke pair $\gh$ is compact, there is another method to define a length function on $\gh$, see Lemma \ref{lem:comp-length}. Also, in Example \ref{exa:lengthfunctions}(ii), for a given discrete Hecke pair $\gh$, we define a length function on $\gh$ using its algebraic structure.
\end{remark}

Let $l$ be a length functions on a locally compact group $G$. Define a function $d_l:G\times G\to [0,\infty[$ by setting $d_l(x,y):=l(x\inv y)$ for all $x,y\in G$. It is straightforward to check that $d_l$ is a {\it pseudo-metric} on $G$, that is for every $x,y,z\in G$, we have
\begin{itemize}
\item[(i)] $d_l(x,x)=0$,
\item[(ii)] $d_l(x,y)=d_l(y,x)$,
\item[(iii)] $d_l(x,z)\leq d_l(x,y) + d_l(y,z)$.
\end{itemize}
This function is also left $G$-invariant;
\begin{itemize}
\item[(iv)] $d_l(gx,gy)=d_l(x,y)$, for all $x,y,g\in G$.
\end{itemize}
If we assume that the kernel of $l$ is trivial, i.e. $N_l=\{e\}$, then $d_l$ is a metric on $G$, that is in addition to Items (i), (ii) and (iii), for every $x,y\in G$, we have
\begin{itemize}
\item[(v)] $d_l(x,y)=0 \Leftrightarrow x=y$.
\end{itemize}
This last condition is equivalent to saying that the topology defined by $d_l$ is Hausdorff.

Conversely, given a left $G$-invariant metric $d$ on $G$, we can define a length function $l_d:G\to [0,\infty[$ by specifying an element $x_0$ of $G$ and setting
\[
l_d(g):=d(x_0,gx_0), \quad \forall g\in G.
\]
Since the action of $G$ on itself is transitive and $d$ is left invariant, $l_d$ does not depend on $x_0$. The kernel of $l_d$ is always trivial. Moreover, we always have $d_{l_d}=d$, and if $N_l=\{e\}$, then we have $l_{d_l}=l$. In the following remark, we see that the correspondence between length functions on $G$ with trivial kernel and metrics on $G$ can be generalized to arbitrary length functions (including length functions on Hecke pairs) and pseudo-metrics on $G$.

\begin{remark}
\label{rem:metric-length}
\begin{itemize}
\item[(i)]  Assume $(X,d)$ is a metric space and $G$ acts continuously on $X$ from left by isometries, so $d(gx, gy)=d(x,y)$ for all $g\in G$ and $x,y\in X$. Fix a point $x_0\in X$ and define $l_{d,x_0}(g):=d(x_0,gx_0)$ for all $g\in G$. Then $l_{d,x_0}$ is a length function on $G$. One notes that $l_{d,x_0}$ depends on $x_0$, and its orbit. The kernel of $l_{d,x_0}$ is the stabilizer  subgroup $G_{x_0}$ of $x_0$, and so it is always a closed subgroup of $G$.

    If the action of $G$ on $X$ is transitive and free, then $l_{d,x_0}$ does not depend on $x_0$. In this case, there is a bijection between $G$ and $X$, which is defined by fixing a point $x_0\in X$, and we can define a metric $d'$ on $G$ using $d$ by setting $d'(g_1,g_2):=d(g_1x_0,g_2x_0)$ and one checks that $l_{d,x_0}=l_{d'}$, or equivalently $d'=d_{l_{d,x_0}}$.
\item[(ii)] The parallelism between the notions of length functions and left invariant pseudo-metric structures on $G$ (or generally any set with an action of $G$) motivates the following definitions: Given a topological space $X$, we call a pseudo-metric $d$ on $X$ {\it locally bounded}  if for every $x_0\in X$ there exists a neighborhood of $x_0$ with bounded diameter, and we call $d$ {\it proper} if for every $x_0\in X$ the map $X\to [0,\infty[$ defined by $x\mapsto d(x_0,x)$ is proper. Then one checks that every locally bounded and proper length function on a group gives rise to a left invariant, locally bounded and proper pseudo-metric on $G$, and vice versa.
\end{itemize}
\end{remark}
The above discussion applies to any Hecke pair $\gh$ where $G$ is a group of isometries of some metric space $X$ and $H$ is contained in the stabilizer of some point of $X$. The following example describes an instance coming from Lie theory. For examples of discrete Hecke pairs coming from groups acting on trees, see \cite{BLRW2009} and Example 4.4 of \cite{s4}.

\begin{example}
\label{exa:Length-sym-spaces}
In Chapter VI of \cite{helgason}, it was shown that every connected semisimple Lie group $G$ with finite center has a maximal compact subgroup $K$ such that for a suitable Riemannian structure on $G/K$, the natural action of $G$ on the homogeneous space $G/K$ (by left multiplication) identifies $G$ with $I_0(G/K)$, the identity component of the Lie group of all isometries of $G/K$. Moreover, $K$ is the stabilizer subgroup of some point $o\in G/K$. Thus by applying Remark \ref{rem:metric-length}(i), we can define a length function on the Hecke pair $(G,K)$. It is also known that every compact subgroup of $G$ is contained in a maximal compact subgroup of $G$ and every maximal compact subgroup of $G$ is a conjugate of $K$. Therefore this construction applies to all Hecke pairs $\gh$, where $H$ is a compact subgroup of $G$.
\end{example}

The study of groups by means of their actions on geometric structures (mostly metric spaces) is a vivid trend in group theory which lies in the domain of geometric group theory. Here, we content ourselves with some definitions and lemmas which are needed for our study and post pone the study of Hecke pairs by means of geometric notions to another time. The interested reader can find most of these materials as well as more comprehensive discussion of geometric group theory for locally compact groups in \cite{cor-harpe}.
\begin{definition}
\label{def:metricgeometry1}
Let $(X_1, d_1)$ and $(X_2, d_2)$ be two pseudo-metric spaces and let $\f:X_1\to X_2$ be a mapping.
\begin{itemize}
\item[(i)] The mapping $\f$ is called {\it large scale Lipschitz} if there are constants $c_0\geq 0$ and $c_1>0$ such that
\[
d_2(\f(x),\f(y))\leq c_1d_1(x,y)+c_0, \quad \forall x,y\in X_1.
\]
It is called {\it Lipschitz} if the above inequality holds for $c_0=0$.
\item[(ii)] The mapping $\f$ is called {\it large scale bilipschitz} if there are constants $c_0, c_0^{\prime}\geq 0$, $c_1, c_1^{\prime}>0$ such that
\[
c_1^{\prime} d_1(x,y)-c_0^{\prime}\leq d_2(\f(x),\f(y))\leq c_1d_1(x,y) +c_0, \quad \forall x,y\in X_1.
\]
It is called {\it bilipschitz} if the above inequalities hold for  $c_0, c_0^{\prime}= 0$.
\item[(iii)] The mapping $\f$ is called {\it large scale bilipschitz equivalence} (resp. {\it bilipschitz equivalence}) if it is bijective and large scale bilipschitz (resp. bilipschitz).
\end{itemize}
\end{definition}

\begin{lemma}
\label{lem:len-geom1} Let $l_1$, $l_2$ be two length functions on $G$ and let $d_1$, $d_2$ be their associated pseudo-metrics, respectively.
\begin{itemize}
\item[(i)] The length function $l_1$ dominates $l_2$ if and only if the identity map $id:(G,d_2) \to (G,d_1)$ is a large scale Lipschitz map. Therefore the length functions $l_1$ and $l_2$ are equivalent if and only if the identity map, $id$, is a large scale bilipschitz equivalence.
\item[(ii)] Let $l_1$ be the length function associated with a compact generating set $S_1$ of $G$ and let $l_2$ be a locally bounded length function on $G$. Then $l_1$ dominates $l_2$.
\item[(iii)] Let $l_1$ and $l_2$ be length functions associated with two compact generating sets $S_1$ and $S_2$ of $G$, respectively. Then they are equivalent.
\item[(iv)] Let $l_1$, $l_2$, $S_1$ and $S_2$ be as described in (iii). Then $id:(G,d_2) \to (G,d_1)$ is a bilipschitz equivalence.
\end{itemize}
\end{lemma}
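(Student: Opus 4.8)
The plan is to treat the four parts in order, reducing (iii) and (iv) to (ii) via a single local boundedness fact whose proof is the only substantial point. For part (i), I would simply unwind the definitions. Each pseudo-metric is left $G$-invariant and satisfies $d_i(x,y)=l_i(x\inv y)$, so the substitution $g=x\inv y$ turns the domination inequality $l_2(g)\le c_1 l_1(g)+c_0$ into the large scale Lipschitz inequality $d_1(x,y)\le c_1 d_2(x,y)+c_0$ for the identity map (with source and target arranged to match the direction of domination), and conversely specialising $x=e$, $y=g$ recovers the inequality for length functions from the one for pseudo-metrics. Mutual domination then corresponds to the two large scale Lipschitz bounds holding simultaneously; since the identity is a bijection, this is exactly the assertion that it is a large scale bilipschitz equivalence, which is the ``therefore'' clause.

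For part (ii), the key observations are that $\widehat{S_1}=S_1\cup S_1\inv\cup\{e\}$ is compact (a finite union of compact sets) and that, $l_2$ being locally bounded, there is $M\ge 0$ with $l_2(s)\le M$ for all $s\in\widehat{S_1}$. Given $g\in G$ with $l_1(g)=n$, write $g=s_1\cdots s_n$ with $s_i\in\widehat{S_1}$; subadditivity of $l_2$ yields $l_2(g)\le\sum_i l_2(s_i)\le Mn=M\,l_1(g)$. Since $l_2(e)=0=l_1(e)$, this gives $l_2(g)\le M\,l_1(g)$ for all $g$, so $l_1$ dominates $l_2$. I note for later use that the additive constant here is $c_0=0$.

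The crux is part (iii), for which I would first prove that a word length function $l_S$ attached to a compact generating set $S$ is locally bounded; granting this, (iii) follows by applying (ii) twice (with the roles of $S_1$ and $S_2$ interchanged), since each $l_{S_i}$ is then bounded on the compact set $\widehat{S_j}$. To prove local boundedness I would use a Baire category argument. Since $G=\bigcup_{n\in\n}\shat^{\,n}$ is an increasing union of compact (hence closed) sets and $G$ is a Baire space, some $\shat^{\,N}$ has nonempty interior; choosing $g_0$ in that interior, the set $W:=g_0\inv\,\mathrm{int}(\shat^{\,N})$ is an open neighbourhood of $e$ with $W\subseteq\shat^{\,N}\shat^{\,N}=\shat^{\,2N}$ (using that $\shat$ is symmetric), so $l_S\le 2N$ on $W$. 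For an arbitrary compact $C$, cover it by finitely many translates $c_1W,\dots,c_kW$; each $g\in C$ lies in some $c_iW$, whence $l_S(g)\le l_S(c_i)+2N\le\max_i l_S(c_i)+2N<\infty$. This is the main obstacle, since the sets $\shat^{\,n}$ need not be neighbourhoods, so one cannot bound $l_S$ on $C$ by naive compactness, and the Baire step is precisely what supplies an interior.

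Finally, part (iv) comes essentially for free from the refinement noted in (ii): because both length functions are word length functions, the argument of (ii) applied in both directions produces genuine Lipschitz bounds $l_{S_2}\le M\,l_{S_1}$ and $l_{S_1}\le M'\,l_{S_2}$ with vanishing additive constants. Feeding these into the $c_0=0$ case of the translation in part (i) gives $\tfrac1{M'}\,d_1\le d_2\le M\,d_1$ (up to matching source and target), i.e. the identity map is bilipschitz; as it is a bijection, it is a bilipschitz equivalence.
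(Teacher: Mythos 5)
Your proposal is correct and follows essentially the same route as the paper's proof: part (i) by unwinding the correspondence between length functions and left-invariant pseudo-metrics, part (ii) by bounding $l_2$ on the compact set $S_1\cup S_1^{-1}\cup\{e\}$ and using subadditivity, and parts (iii)--(iv) by applying (ii) in both directions and noting that the additive constant vanishes. The only difference is that you supply a self-contained Baire category proof that word length functions of compact generating sets are locally bounded, a fact the paper does not reprove inside the lemma but instead records in Remark \ref{rem-lengths} with a citation to \cite{chatpitsaloff}.
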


\begin{proof}
\begin{itemize}
\item[(i)] It follows from the correspondence between length functions on $G$ and pseudo-metrics on $G$.
\item[(ii)] Since $l_2$ is locally bounded and $S_1$ is compact, there is some $c\geq 0$ such that $c\geq l_2(s)$ for all $s\in S_1\cup S_1\inv$. By definition, for every $e\neq g\in G$, there is $n\in \n$ such that $l_1(g)=n$ and there are some $s_1,\cdots, s_n\in (S_1\cup S_1\inv)-\{e\}$ such that $g=s_1\cdots s_n$. For every $1\leq i\leq n$, we have $l_1(s_i)=1$. Thus we have
    \begin{eqnarray*}
    l_2(g)&=&l_2(s_1\cdots s_n)\leq l_2(s_1)+\cdots+l_2(s_n)\\
    &\leq& cn=cl_1(g).\\
    \end{eqnarray*}
\item[(iii)] It follows from (ii) and the fact that every length function associated with a compact generating set is locally bounded.
\item[(iv)] It follows from the proof of (ii).
\end{itemize}
\end{proof}
\begin{remark}
\label{rem-lengths} Let $l_1$ and $l_2$ be two length functions on $G$.
\begin{itemize}
\item[(i)] If $l_1$ and $l_2$ are equivalent, then $l_1$ is proper (resp. locally bounded) if and only if $l_2$ is proper (resp. locally bounded).
\item[(ii)] If $G$ is compactly generated, then it possesses a compact generating set which is also a symmetric neighborhood of $\{e\}$, for the proof see Page 514 of \cite{chatpitsaloff}.
\item[(iii)] It follows immediately that every length function associated with a compact generating set is proper and locally bounded.
\end{itemize}
\end{remark}

Whenever $H$ is a compact subgroup of a locally compact group $G$, the following lemma is a useful tool to define length functions on the Hecke pair $\gh$ using length functions defined on $G$. Its proof is a modification of the proof of Lemma 2.1.3 of \cite{j2}.

\begin{lemma}
\label{lem:comp-length}
Let $H$ be a compact subgroup of a locally compact group $G$. If $l$ is a length function on $G$ which is bounded on $H$, then there exists a length function $l'$ on $G$ such that the following statements hold:
\begin{itemize}
\item[(i)] The length functions $l$ and $l'$ are equivalent.
\item[(ii)] The kernel of $l'$ contains $H$, i.e. $l'$ is a length function on the Hecke pair $\gh$.
\item[(iii)] If the length function $l$ is locally bounded (resp. proper), then $l'$ is locally bounded (resp. proper), both as a length function on $G$ and as a length function on the Hecke pair $\gh$.
\end{itemize}
\end{lemma}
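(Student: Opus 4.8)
The plan is to realize $l'$ as a length function arising from an isometric action, via Remark~\ref{rem:metric-length}(i), so that the length-function axioms and the containment $H\subseteq N_{l'}$ come essentially for free. Let $d_l(x,y):=l(x\inv y)$ be the left-invariant pseudo-metric attached to $l$, and put $M:=\sup_{h\in H}l(h)$, which is finite by hypothesis. On the homogeneous space $X:=G/H$ I would define $\tilde d(xH,yH)$ to be the \emph{Hausdorff distance}, computed in $(G,d_l)$, between the cosets $xH$ and $yH$ regarded as subsets of $G$. Since left translation by any $g\in G$ is a $d_l$-isometry carrying $xH$ to $gxH$, this Hausdorff distance is left $G$-invariant, so $\tilde d$ is a $G$-invariant pseudo-metric on $X$ (if one insists on a genuine metric, one passes to the quotient of $X$ by the relation $\tilde d=0$, on which $G$ still acts by isometries). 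Applying Remark~\ref{rem:metric-length}(i) with base point $x_0=eH$, the function $l'(g):=\tilde d(eH,gH)$ is a length function on $G$ whose kernel is the stabiliser of $eH$; since every $h\in H$ fixes $eH$, we get $l'(h)=\tilde d(eH,eH)=0$, which is exactly (ii).

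For (i), I would record the two-sided estimate $l(g)-M\leq l'(g)\leq l(g)+M$. The upper bound follows by using $e\in H$ as a representative: for any $a\in H$ the distance from $a$ to the coset $gH$ is at most $d_l(a,g)=l(a\inv g)\leq M+l(g)$, and symmetrically $\inf_{b}d_l(e,gb)=\inf_b l(gb)\leq l(g)+M$, so both one-sided Hausdorff quantities are $\leq l(g)+M$. The lower bound follows from $l'(g)\geq \inf_{b\in H}l(gb)\geq l(g)-M$, using $l(g)\leq l(gb)+l(b\inv)\leq l(gb)+M$. These inequalities show at once that $l$ and $l'$ dominate each other (with $c_1=1$, $c_0=M$ in each direction), giving (i).

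For (iii), the same estimate $|l'(g)-l(g)|\leq M$ transfers local boundedness and properness from $l$ to $l'$ as length functions on $G$: boundedness on a compact set is preserved, and $l'\inv([0,n])\subseteq l\inv([0,n+M])$ is relatively compact whenever $l$ is proper. To pass to the Hecke pair, i.e. to $\hg$, I would use that $H$ is compact, which makes the quotient map $\pi:G\to \hg$ proper: the preimage of a compact set is closed inside a finite union of sets of the form $H\overline{U_i}$ with $\overline{U_i}$ compact, hence compact, and a compact subset of $\hg$ is therefore the image of a compact subset of $G$. Since $l'$ is bi-$H$-invariant it descends to $\hg$, and boundedness on compacta, respectively relative compactness of sublevel sets, then transfer along $\pi$, yielding (iii).

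The main obstacle is the triangle inequality for $l'$, and it is precisely here that the Hausdorff distance is the right device. The more obvious candidate $l'(g)=\inf_{h_1,h_2\in H}l(h_1 g h_2)$ is symmetric and vanishes on $H$, but its subadditivity fails in general, because when one estimates $l'(gk)$ by inserting $h'h'\inv$ between $g$ and $k$, the inner element $h'$ that is optimal for $g$ need not agree with the one optimal for $k$; right multiplication by $H$ is not $d_l$-isometric, so this mismatch cannot be repaired. Replacing the infimum by the Hausdorff distance removes the difficulty, since the triangle inequality and symmetry of the Hausdorff distance, together with the left-invariance of $d_l$, deliver the length-function axioms directly. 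A minor technical point to dispatch is the Borel measurability of $l'$, which is inherited from that of $l$ using the compactness of $H$.
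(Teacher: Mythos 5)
Your proof is correct, but it takes a genuinely different route from the paper's. The paper (adapting Jolissaint's Lemma 2.1.3) first averages over conjugation, setting $l_1(g):=\int_H l(hgh\inv)\,d\eta(h)$; this $l_1$ is equivalent to $l$ and satisfies the commutation property $l_1(hg)=l_1(gh)$ for all $h\in H$, $g\in G$, and precisely because of that property the ``naive'' candidate $l'(g):=\inf_{h,h'\in H}l_1(hgh')=\inf_{h}l_1(hg)=\inf_{h}l_1(gh)$ \emph{is} subadditive: from $l_1(axyb)\le l_1(ax)+l_1(yb)$ the two infima can be taken independently. So the obstruction you identify (the mismatched inner element of $H$) is real for $l$ itself, but your assertion that it ``cannot be repaired'' overshoots: repairing it by first averaging is exactly the heart of the paper's argument. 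Your Hausdorff-distance construction sidesteps the same difficulty by purely metric means: it never uses the Haar measure, it gives the cleaner additive estimate $|l'(g)-l(g)|\le M$ (the paper's equivalence constants involve $\eta(H)$ and $\int_H l\,d\eta$), and the length-function axioms come from the triangle inequality for the Hausdorff distance plus left invariance of $d_l$, i.e.\ from Remark \ref{rem:metric-length}. Two minor blemishes, neither fatal: in your upper bound the second one-sided Hausdorff quantity is $\sup_{b\in H}\inf_{a\in H}d_l(a,gb)$, so the quantifier over $b$ should be a supremum, the inequality $\inf_{a\in H}d_l(a,gb)\le l(gb)\le l(g)+M$ holding uniformly in $b$ being what does the work; and Borel measurability of $l'$, a sup-inf over the uncountable set $H$, is a genuine subtlety that you only flag --- though the paper's own infimum construction is open to the same objection, so this does not put you behind its proof.
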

\begin{proof}
Define $l_1:G\to [0,\infty[$ by
\[
l_1(g):=\int_H l(h gh\inv) d\eta(h), \quad \forall g\in G.
\]
One observes that $l_1$ is a length function on $G$ such that
\begin{equation}
\label{eqn:Hlength1}
l_1(hg)=l_1(gh), \quad \forall g\in G, h\in H.
\end{equation}
Also, $l_1$ is equivalent to $l$. This follows from the boundedness of $l$ on $H$ and the following inequalities:
\begin{equation}
\label{eqn:Hlength2}
l_1(g) \leq \eta(H) l(g) +2\int_H l(h)d\eta(h), \quad \forall g\in G,
\end{equation}
\[
l(g)\leq \frac{1}{\eta(H)} l_1(g)+\frac{2}{\eta(H)}\int_H l(h)d\eta(h), \quad \forall g\in G.
\]
It follows from (\ref{eqn:Hlength2}) that $l_1$ is bounded on $H$ too. Now, define $l':G\to [0,\infty[$ by
\[
l'(g):=\inf_{h,h' \in H} l_1(h gh'), \quad \forall g\in G.
\]
Using (\ref{eqn:Hlength1}), one observes that $l'(g)=\inf_{h \in H} l_1(h g)=\inf_{h \in H} l_1(gh)$ for all $g\in G$. Therefore $l'$ is a length function on $G$. Moreover, $l'(g)\leq l_1(g)$ and $l_1(g)\leq l_1(gh)+l_1(h)$ for all $g\in G$ and $h\in H$. By boundedness of $l_1$ on $H$ and these inequalities, $l_1$ and $l'$ are equivalent. Assertions (i), (ii) and (iii) follow from the above discussion.
\end{proof}
Besides the zero length function and constructions discussed in Remark \ref{rem:metric-length} and Lemma \ref{lem:comp-length}, the following examples are useful too:
\begin{example}
\label{exa:lengthfunctions}
\begin{itemize}
\item[(i)] Let $\gh$ be a Hecke pair. Define $l:G\to \{0,1\}$ by
    \[
l(g):=\left\{ \begin{array}{lll} 0&\quad &g\in H\\
1& \quad& \text{otherwise}  \end{array}\right.
    \]
    This is a length function on $G$ which is proper if and only if $H$ is cocompact in $G$.
\item[(ii)] Let $\gh$ be a relatively unimodular discrete Hecke pair. Let $L$ be as defined in Section \ref{sec:INTRO}. Define $l_c:G\to [0,\infty[$ by setting $l_c(g):=\log(L(g))$. One checks that $l_c(g)=0$ if and only if $g$ belongs to the normalizer of $H$ in $G$, i.e. $g\in N_G(H)$. Due to relative unimodularity of $\gh$, one also checks that $l_c(g)=l_c(g\inv )$ for all $g\in G$. For given $x,y \in G$, let $m=L(x)$, $n=L(y)$, $HxH= \cup_{i=1}^m x_i H$ and $HxH= \cup_{j=1}^n y_j H$. Then we have
    \begin{eqnarray*}
    HxyH&\subseteq& (HxH)( HyH) =\left( \bigcup_{i=1}^m x_i H\right)HyH\\
    &=&\bigcup_{i=1}^m x_i \left( \bigcup_{j=1}^n  y_j H\right) = \bigcup_{i=1}^m \bigcup_{j=1}^n x_i y_j H.
    \end{eqnarray*}
    This shows that $L(xy)\leq L(x) L(y)$, and so $l_c(xy)\leq l_c(x)+l_c(y)$. Therefore $l_c$ is a length function on the Hecke pair $\gh$. We call it the {\it characteristic length function of $\gh$}.

    By using $L(g)R(g)$ instead of $L(g)$ in the definition of $l_c$, one can drop the condition of relative unimodularity. However, since we are dealing with property (RD) of Hecke pairs and relative unimodularity is a necessary condition for (RD), see Corollary \ref{cor:unimod-RD}, this condition is not an important restriction.

    One observes that $l_c$ is always locally bounded. Moreover, it follows from Theorem 3 of \cite{berlenstra} that $l_c$ is bounded if and only if $H$ is a nearly normal subgroup of $G$, see also the proof of Proposition 3.10(i) of \cite{s4}. Therefore it is an interesting problem to find a condition which is equivalent to (or at least implies that) $l_c$ is a proper length function. Surprisingly, one notes that such a sufficient condition has already appeared in a different context in the literature, see Proposition 2 of \cite{BCH}.
\end{itemize}
\end{example}

While dealing with length functions on discrete Hecke pairs, it is necessary to pay attention to certain easy but important details.
\begin{remark}
\label{rem:len-schlich-reduced}
\begin{itemize}
\item[(i)] Let $\gh$ be a discrete Hecke pair and let $(G_r,H_r)$ be its reduction. For every length function $l$ on $\gh$, we define a length function $l_r$ on the Hecke pair $(G_r,H_r)$ by $l_r(gK_\gh):=l(g)$ for all $gK_\gh\in G/K_\gh$. Then the mapping $l\mapsto l_r$ is a natural bijective correspondence between the set of length functions on $\gh$ and the ones on $\ghr$. It is clear that this correspondence preserves local boundedness, properness and equivalence of length functions. Moreover, if $S$ is a generating set for $\gh$, then $S':=\{sK_\gh, s\in S\}\subseteq G_r$ is a generating set for $\ghr$. One also notes that the same comment applies to any other normal subgroup $K$ of $G$ which is contained in $H$.
\item[(ii)] Let $\gh$ be a reduced discrete Hecke pair and let $\ghb$ be its Schlichting completion. If $\bar{l}:\gb \to [0,\infty[$ is a length function on the Hecke pair $\ghb$, then its restriction to $G$, say $l$, is clearly an algebraic length function on $G$. We only need to check that $l$ is a Borel function. But this follows from the fact that $H$ is open in $G$ and $l$ is constant on every left coset of $H$ in $G$.

    Conversely, let $l:G\to [0,\infty[$ be a length function on $\gh$. By Lemma \ref{lem:bijective-Schlich}(i), we can define $\bar{l}:\gb \to [0,\infty[$ by $\bar{l}(x):= l(g_x)$ for all $x\in \gb$, where $g_x\in G$ is chosen such that $\hb x=\hb g_x$. Using Lemma \ref{lem:bijective-Schlich}(i) and the fact that $l$ is a bi-$H$-invariant function on $G$, $\bar{l}$ is a well defined bi-$\hb$-invariant function on $\gb$ such that $\bar{l}(x)=0$ for all $x\in \hb$. It follows that $\bar{l}$ is a continuous function, and so a Borel function. Since $HgH\mapsto Hg\inv H$ is a bijection from $G//H$ onto $G//H$ and by Lemma \ref{lem:bijective-Schlich}(ii), we have $\bar{l}(x)=\bar{l}(x\inv)$ for all $x\in \gb$. For given $x,y\in \gb$, pick $g_x, g_{y\inv}\in G$ such that $\hb x=\hb g_x$ and $\hb y\inv= \hb g_{y\inv}$. Thus $\hb xy \hb= \hb g_x g_{y\inv}^{-1} \hb$, and therefore
\[
\bar{l}(xy) = l(g_x g_{y\inv}^{-1} )\leq l(g_x) +l(g_{y\inv})= \bar{l}(x) + \bar{l}(y\inv)=\bar{l}(x) + \bar{l}(y).
\]
    This shows that $\bar{l}$ is a length function on the Hecke pair $\ghb$. Therefore there is a natural bijection between the sets of length functions on the Hecke pairs $\gh$ and $\ghb$ as above.

    Finally, one notes that if $S$ is a finite generating set for $\gh$, then $\hb S$ is a compact generating set for $\gb$. Conversely, if $\gb$ is compactly generated, then the discrete Hecke pair $\gh$ is finitely generated.
\end{itemize}
\end{remark}
We shall see in Remark \ref{rem:RDloc-bound-proper} that for property (RD), we need locally bounded and proper length functions. But the characteristic length function of a Hecke pair can easily fail to be proper, for instance when $[N_G(H):H]=\infty$. So we need another method to ensure the existence of locally bounded and proper length functions on discrete Hecke pairs, at least when they are finitely generated.

\begin{lemma}
\label{lem:lengthexist}
Let $\gh$ be a finitely generated discrete Hecke pair. Then there exists a locally bounded and proper length function on $\gh$.
\end{lemma}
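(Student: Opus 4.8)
The plan is to transport the problem to the Schlichting completion, where the compact open subgroup $\hb$ lets me combine the word length machinery of Remark~\ref{rem-lengths} with the averaging trick of Lemma~\ref{lem:comp-length}. First I would dispose of the reduction. By Remark~\ref{rem:len-schlich-reduced}(i), length functions on $\gh$ correspond bijectively to length functions on its reduction $\ghr$, and this correspondence preserves local boundedness, properness, and generating sets; moreover $\ghr$ is again finitely generated and is reduced. Hence it suffices to produce a locally bounded and proper length function on a \emph{reduced} finitely generated Hecke pair, and I assume $\gh$ reduced from now on.

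Next I would pass to the Schlichting completion $\ghb$, in which $\hb$ is a compact open subgroup of the totally disconnected group $\gb$ and $G$ sits densely. If $S$ is a finite generating set of $\gh$, then by the last paragraph of Remark~\ref{rem:len-schlich-reduced}(ii) the set $\hb S$ is a compact generating set of $\gb$, so $\gb$ is compactly generated. Let $\bar l$ be the word length function on $\gb$ associated with a compact generating set (using Remark~\ref{rem-lengths}(ii) one may take it to be a symmetric neighborhood of $e$); by Remark~\ref{rem-lengths}(iii) it is proper and locally bounded on $\gb$, and in particular bounded on the compact subgroup $\hb$. Applying Lemma~\ref{lem:comp-length} with $H$ replaced by $\hb$ then yields a length function $\bar l'$ on $\gb$ that is equivalent to $\bar l$, has $\hb$ in its kernel, and—by item~(iii) of that lemma—is locally bounded and proper both on $\gb$ and as a length function on the Hecke pair $\ghb$.

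Finally I would restrict $\bar l'$ to $G$. By Remark~\ref{rem:len-schlich-reduced}(ii) this restriction $l'$ is a length function on $\gh$. Local boundedness is automatic, since $\hg$ is discrete and hence every compact (thus finite) subset is bounded. For properness, note that $\hb$ open forces $\hb\ba\gb$ to be discrete, so properness of $\bar l'$ on $\ghb$ means that each level set $\{\hb x : \bar l'(x)\le n\}$ is finite. Under the $G$-equivariant bijection $\alpha:\hg\to\hb\ba\gb$ of Lemma~\ref{lem:bijective-Schlich}(i), which sends $Hg$ to $\hb g$, this level set pulls back to $\{Hg : l'(g)\le n\}$ (the values of $\bar l'$ and $l'$ match because both are bi-invariant and correspond under the double-coset bijection $\beta$, and $\bar l'$ extends $l'$); the latter set is therefore finite, so $l'$ is proper on $\gh$. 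Transporting $l'$ back along the bijection of Remark~\ref{rem:len-schlich-reduced}(i) gives the desired length function on the original pair.

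The hard part will be the last step: making sure that properness genuinely survives the passage between $\ghb$ and $\gh$. This hinges on using two facts carefully together—that $\hb$ is open in $\gb$, so relative compactness in $\hb\ba\gb$ is just finiteness, and that the level sets of $\bar l'$ and $l'$ correspond under the bijections $\alpha$ and $\beta$ of Lemma~\ref{lem:bijective-Schlich} precisely because both functions are bi-invariant and one extends the other. Everything else is bookkeeping: local boundedness is free in the discrete setting, and the reduction step is purely formal.
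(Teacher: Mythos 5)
Your proof is correct and follows essentially the same route as the paper's: pass to the Schlichting completion via Remark \ref{rem:len-schlich-reduced}, take a word length function on the compactly generated group $\gb$ (proper and locally bounded by Remark \ref{rem-lengths}(iii)), and apply Lemma \ref{lem:comp-length} with the compact open subgroup $\hb$ to push $\hb$ into the kernel. The only difference is that you make explicit the reduction step and the verification that properness survives the passage back to $\gh$, details the paper compresses into the phrase ``using the Schlichting completion and the above remark, we can assume that $H$ is a compact open subgroup of $G$.''
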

\begin{proof}
Using the Schlichting completion and the above remark, we can assume that $H$ is a compact open subgroup of $G$. Then $\gh$ being finitely generated amounts to $G$ being compactly generated. Thus there exists a locally bounded and proper length function $l$ on $G$, see Remark \ref{rem-lengths}(iii). By Lemma \ref{lem:comp-length}, we can replace $l$ with another locally bounded and proper length function, say $l'$, (equivalent to $l$)  such that the kernel of $l'$ contains $H$.
\end{proof}

Let $H$ and $K$ be two subgroups of a group $G$. They are called {\it commensurable} if $H\cap K$ is a finite index subgroup of both $H$ and $K$.

\begin{lemma}
\label{lem:commen-length}
Let $H$ and $K$ be two open Hecke subgroups of a group $G$ and let they be commensurable. Given a length function $l$ on one of the Hecke pairs $\gh$ or $(G,K)$, one can find a length function $l'$ on $G$ such that it is equivalent to $l$ and the kernel of $l'$ contains both $H$ and $K$.
\end{lemma}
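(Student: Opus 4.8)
The plan is to reduce the statement to the situation of Lemma \ref{lem:comp-length}, in which the subgroup to be pushed into the kernel is compact. By the symmetry between $H$ and $K$ I may assume that $l$ is a length function on the pair $\gh$, so that $l$ is bi-$H$-invariant. First I would record that $l$ is bounded on $K$: writing $n:=[K:H\cap K]<\infty$ and $K=\bigcup_{i=1}^{n}(H\cap K)k_i$, the left $H$-invariance of $l$ (note $H\cap K\subseteq H\subseteq N_l$) gives $l(k)=l(k_i)$ whenever $k\in (H\cap K)k_i$, so that $\sup_{k\in K}l(k)=\max_{1\le i\le n}l(k_i)=:M<\infty$. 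Thus $H$ is killed outright by $l$ and $K$ lies in the region where $l$ is bounded, which is the input needed for the absorption argument.

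Next I would pass to the Schlichting completion in order to make the subgroups compact. After replacing $\gh$ by its reduction (Remark \ref{rem:len-schlich-reduced}(i)) I may assume $\gh$ is reduced, and by Remark \ref{rem:len-schlich-reduced}(ii) the length function $l$ on $\gh$ corresponds to a length function $\bar l$ on the Schlichting completion $\ghb$, where $\hb$ is compact open in the totally disconnected locally compact group $\gb$; since this correspondence respects local boundedness, properness and equivalence, it suffices to produce the desired function upstairs and pull it back. Let $\overline{K}$ be the closure of $K$ in $\gb$. Because $H\cap K$ has finite index in $K$ and $\hb$ is compact, $\overline{K}$ is again a compact open subgroup of $\gb$, commensurable with $\hb$, and $\bar l$ stays bounded on $\overline{K}$ by the previous paragraph together with continuity of $\bar l$ on cosets.

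Now set $\overline{\Gamma}:=\overline{\langle H,K\rangle}$, the closed subgroup of $\gb$ generated by $\hb$ and $\overline{K}$. The heart of the argument is to show that $\overline{\Gamma}$ is compact and that $\bar l$ is bounded on it; this is exactly where the commensurability of the two Hecke subgroups (as opposed to an arbitrary pair of open subgroups) must be exploited, and it is the step I expect to be the main obstacle, since in general two commensurable compact open subgroups can generate a non-compact subgroup. Granting compactness of $\overline{\Gamma}$, it is a compact subgroup of $\gb$ containing both $\hb$ and $\overline{K}$ on which $\bar l$ is bounded, so applying Lemma \ref{lem:comp-length} with $\overline{\Gamma}$ in the role of $H$ produces a length function $\overline{l'}$ on $\gb$, equivalent to $\bar l$, whose kernel contains $\overline{\Gamma}$ and hence both $\hb$ and $\overline{K}$; local boundedness and properness are preserved. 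Pulling $\overline{l'}$ back through Remark \ref{rem:len-schlich-reduced}(ii) and undoing the reduction via Remark \ref{rem:len-schlich-reduced}(i) yields a length function $l'$ on $G$, equivalent to $l$, whose kernel contains $H$ and $K$.

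Alternatively, one can carry out the delicate step directly on $G$ by mimicking the proof of Lemma \ref{lem:comp-length} with $\Gamma:=\langle H,K\rangle$ in place of the compact $H$: form $l'(g):=\inf_{\gamma,\gamma'\in\Gamma}l(\gamma g\gamma')$ and verify that this two-sided infimum collapses to a one-sided one and is subadditive. Checking subadditivity is precisely the point at which boundedness of $l$ on $\Gamma$ (equivalently, finiteness of $[\Gamma:H]$) is indispensable. Once subadditivity is in hand, the equivalence $l'\sim l$ is routine: one has $l'(g)\le l(g)$ by taking $\gamma=\gamma'=e$, and $l(g)=l\bigl(\gamma\inv(\gamma g\gamma')\gamma'^{-1}\bigr)\le l'(g)+2\sup_{\Gamma}l$ after taking the infimum, exactly as in Lemma \ref{lem:comp-length}, while $l'(h)=l'(k)=0$ for $h\in H$ and $k\in K$ is immediate from the definition.
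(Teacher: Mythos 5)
Your preparatory steps are sound: the boundedness of $l$ on $K$, the passage to the reduction and the Schlichting completion, and the fact that the closure $\overline{K}$ is a compact open subgroup on which $\bar l$ stays bounded. But both of your routes then rest on the one claim you explicitly leave open --- that $\overline{\Gamma}=\overline{\langle H,K\rangle}$ is compact (equivalently, that $l$ is bounded on $\langle H,K\rangle$, equivalently that $[\langle H,K\rangle:H]<\infty$) --- and this is a genuine gap that cannot be closed, because the claim, and with it the lemma read in this generality, is false. Take $G=SL_2(\z[1/p])$, $H=SL_2(\z)$, and $K=\alpha SL_2(\z)\alpha\inv\subseteq G$, where $\alpha=\mathrm{diag}(1,p)$; these are commensurable Hecke subgroups (both open, $G$ being discrete) with $H\cap K=\Gamma_0(p)$ of index $p+1$ in each, yet by Serre's classical amalgam decomposition $SL_2(\z[1/p])=SL_2(\z)\ast_{\Gamma_0(p)}\alpha SL_2(\z)\alpha\inv$ they generate all of $G$. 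Hence any length function whose kernel contains both $H$ and $K$ has kernel equal to $G$, i.e.\ is identically zero, and the zero length function is equivalent to $l$ only if $l$ is bounded. Since $\gh$ here is a finitely generated discrete Hecke pair, Lemma \ref{lem:lengthexist} supplies a proper locally bounded length function $l$ on $\gh$, which is unbounded because $\hg$ is infinite; for this $l$ no $l'$ as in the statement can exist. So your instinct at the step you called the main obstacle was exactly right: no argument can supply the missing compactness.

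The paper's own proof never confronts this configuration. It opens by declaring ``without loss of generality, we can assume that $K$ is a finite index subgroup of $H$'' and proves only that nested case: given $l$ on $(G,K)$, pass to the reduction and the Schlichting completion of $(G,K)$, note that the closure of $H$ there is a finite union of cosets of the compact open closure of $K$, hence compact, with the transferred length function bounded on it, and apply Lemma \ref{lem:comp-length}; the direction in which $l$ already lives on $\gh$ is trivial, since then $K\subseteq H\subseteq N_l$. In the nested case $\langle H,K\rangle$ is simply the larger subgroup, so the compactness you could not establish is automatic, and your steps 1--3 then amount to the paper's argument. Note, however, that the paper's ``without loss of generality'' is itself not justified and, by the example above, cannot be: running the nested case through $H\cap K$ produces a kernel containing $H\cap K$ and one of the two subgroups, never both. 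What survives --- and what is all that the intended application, Corollary \ref{cor:commen-RD}, requires, by applying the nested case twice through $H\cap K$ --- is the statement for $K\subseteq H$ of finite index, which both you (implicitly) and the paper do prove correctly.
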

\begin{proof}
Without loss of generality, we can assume that $K$ is a finite index subgroup of $H$. If $l$ is a length function on $(G,K)$, then by an argument built on the Schlichting completion, similar to the proof of Lemma \ref{lem:lengthexist}, we obtain a length function $l'$ equivalent to $l$ such that the kernel of $l'$ contains both $K$ and $H$. The other implication is obvious.
\end{proof}

Using the above lemma, we can improve the statement of Theorem 2.2 of \cite{s2} as follows:
\begin{corollary}
\label{cor:commen-RD} Let $H$, $K$ and $G$ be as Lemma \ref{lem:commen-length}. Then the Hecke pair $\gh$ has property (RD) if and only if the Hecke pair $(G,K)$ has property (RD).
\end{corollary}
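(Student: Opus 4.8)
The plan is to reduce the whole statement to the existence of a single length function that serves both Hecke pairs at once, after which the transfer of property (RD) is the quantitative comparison already carried out in Theorem 2.2 of \cite{s2}. By symmetry it suffices to prove one implication, say that property (RD) of $\gh$ forces property (RD) of $\gk$. Suppose $\gh$ has (RD); then there is a proper, locally bounded length function $l$ on $\gh$ together with a polynomial bound on the operator norms $\|\la(f)\|$ in terms of the $l$-support radius of $f$. First I would feed this $l$ into Lemma \ref{lem:commen-length}, producing a length function $l'$ on $G$ that is equivalent to $l$ and whose kernel contains both $H$ and $K$; thus $l'$ is at the same time a length function on $\gh$ and on $\gk$. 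By Remark \ref{rem-lengths}(i) the function $l'$ is again proper and locally bounded, and because property (RD) is insensitive to replacing a length function by an equivalent one (only the witnessing polynomial changes), $l'$ continues to witness (RD) for $\gh$.

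With this common length function in hand, the second step is to move the polynomial bound from $\gh$ to $\gk$. Here I would invoke the commensurability argument of Theorem 2.2 of \cite{s2}: replacing $H$ and $K$ by their intersection if necessary, one may assume $K\subseteq H$ with $[H:K]<\infty$, so that the natural projection $K\ba G\to \hg$ is finite-to-one. This projection induces mutually bounded comparison maps between $\ell^2(K\ba G)$ and $\ell^2(\hg)$ and between the two Hecke algebras, with operator bounds depending only on the commensurability index and not on the support radius; moreover the $l'$-balls in the two homogeneous spaces correspond under the projection with comparable cardinalities, precisely because $l'$ descends from the common length function. Feeding these comparisons into the (RD) estimate for $\gh$ yields a polynomial bound for the left regular representation of $\gk$ with respect to $l'$, which is what property (RD) of $\gk$ requires.

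The genuinely substantive point---and the step I expect to be the main obstacle---is this last quantitative transfer across the finite covering: the two pairs live on different homogeneous spaces $\hg$ and $K\ba G$ carrying different convolution products, and one must show that $\|\la(f)\|$ for $\gk$ and the corresponding norms for $\gh$ are comparable up to constants controlled solely by $[H:K]$, uniformly over the support radius. That comparison is exactly the content preserved from \cite{s2}; the new ingredient supplied here is purely on the geometric side, namely that Lemma \ref{lem:commen-length} lets us manufacture a proper, locally bounded length function common to both pairs out of any witnessing length function for one of them. Removing the earlier need to assume that such a common length function is given is precisely the improvement over Theorem 2.2 of \cite{s2}.
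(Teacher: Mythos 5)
Your proposal is correct and follows essentially the same route as the paper: the paper's (implicit) proof of Corollary \ref{cor:commen-RD} is precisely to combine Lemma \ref{lem:commen-length}, which manufactures a single length function with kernel containing both $H$ and $K$, with the quantitative commensurability transfer already established in Theorem 2.2 of \cite{s2}, together with the insensitivity of property (RD) to equivalent length functions (Remark \ref{rem:equi-length-RD}). The only cosmetic slip is that you assume the witnessing length function for $\gh$ is proper, whereas Definition \ref{def:propertyRD} only requires local boundedness; this is harmless, since properness plays no role in either Lemma \ref{lem:commen-length} or the transfer argument.
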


\section{Property (RD) for locally compact Hecke pairs}
\label{sec:generalRD}

In this section we state and prove our main theorems concerning property (RD) and their consequences.

\begin{definition}
\label{def:propertyRD} Let $\gh$ be a Hecke pair.

\begin{itemize}
\item[(i)] Given $s>0$, every locally bounded length function $l$ on $\gh$ defines a {\it weighted $L^2$-norm on $\H\gh$} as follows:
    \[
    \|f\|_{s,l}:=\left(\int_{H\ba G} |f(x)|^2 (1+l(x))^{2s} \right)^{1/2}, \quad \forall f\in \H\gh,
    \]
\item[(ii)] We say that $\gh$ has {\it property (RD)} if there exist a locally bounded length function $l$ on $\gh$ and real numbers $s,c>0$ such that
    \[
    \|\la(f)\|\leq c\|f\|_{s,l}, \quad \forall f\in \H\gh,
    \]
    where $\|\la(f)\|$ is the {\it convolution norm of $f$} defined using the left regular representation.
\end{itemize}
\end{definition}
In the following remark we explain why we usually consider locally bounded and proper length functions in our study of property (RD).

\begin{remark}
\label{rem:RDloc-bound-proper}
\begin{itemize}
\item[(i)] The locally boundedness of a length function $l$ is required in order to insure that the weighted $L^2$-norm $\|f\|_{s,l}$ is well defined for all $f\in \H\gh$.
\item[(ii)] Boundedness of length functions imposes a substantial restriction on groups having property (RD). In fact, if a locally compact group $G$ has (RD) with respect to a bounded length function $l$, then the space $L^2(G)$ is closed under the convolution product, and so it becomes an algebra. However, when $G$ is an abelian locally compact group, $L^2(G)$ equipped with the convolution product is an algebra if and only if $G$ is compact, see \cite{rajago}. This shows that for the purpose of studying property (RD) for non-compact groups, we have to consider non-bounded length functions. One way to avoid bounded length functions is to consider proper length functions. The reason is that if $l$ is a bounded and proper length function on a locally compact group $G$, then $G$ has to be compact, see also \cite{cor-lengths} for other clues that boundedness and properness of length functions are opposite notions in some sense.
\end{itemize}
\end{remark}
The boundedness of the left regular representation is a necessary condition to study various features of property (RD) for Hecke pairs in analogy with property (RD) for groups, see the following proposition and Proposition \ref{prop:RDdefinitions}.

\begin{proposition}
\label{prop:cofiniteRD}
Let $H$ be a cocompact subgroup of $G$. Then the Hecke pair $\gh$ has (RD) whenever the left regular representation $\la:\H\gh\to B(L^2(\hg))$ is bounded.
\end{proposition}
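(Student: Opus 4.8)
The plan is to show that for cocompact $H$ property (RD) collapses to a mere $L^2$-boundedness statement for $\la$, which I can then extract from the hypothesized boundedness of $\la$ together with the finiteness of the invariant measure on $\hg$. The only structural fact I need from the hypotheses is that $H$ is cocompact: this makes $\hg$ compact, so the right $G$-invariant Radon measure $\nu$ furnished by Weil's formula is finite, i.e. $\nu(\hg)<\infty$.

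First I would fix any locally bounded length function $l$ on $\gh$ and any $s>0$; a convenient concrete choice is the $\{0,1\}$-valued length function of Example \ref{exa:lengthfunctions}(i), which in the cocompact case is also proper, so that (RD) ends up witnessed by a proper length function in the spirit of Remark \ref{rem:RDloc-bound-proper}. Since $l\geq 0$ we have $(1+l(x))^{2s}\geq 1$ for every $x\in\hg$, hence $\|f\|_2\leq\|f\|_{s,l}$ for all $f\in\H\gh$, where $\|f\|_2$ is the $L^2$-norm on $\hg$. Consequently it suffices to produce a constant $c>0$ with $\|\la(f)\|\leq c\|f\|_2$, as the inequality $\|\la(f)\|\leq c\|f\|_{s,l}$ demanded by Definition \ref{def:propertyRD} follows immediately. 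To get the $L^2$-bound I would use that boundedness of $\la$ supplies a constant $C>0$ with $\|\la(f)\|\leq C\|f\|_1$, where $\|f\|_1=\int_{\hg}|f(x)|\,d\nu(x)$, and then apply Cauchy--Schwarz on the finite measure space $(\hg,\nu)$:
\[
\|f\|_1=\int_{\hg}|f(x)|\,d\nu(x)\leq \nu(\hg)^{1/2}\left(\int_{\hg}|f(x)|^2\,d\nu(x)\right)^{1/2}=\nu(\hg)^{1/2}\|f\|_2.
\]
Chaining the estimates gives $\|\la(f)\|\leq C\,\nu(\hg)^{1/2}\|f\|_2\leq C\,\nu(\hg)^{1/2}\|f\|_{s,l}$, so $c:=C\,\nu(\hg)^{1/2}$ completes the proof.

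I expect no serious obstacle here, precisely because cocompactness neutralizes the genuine difficulty of (RD). For a general group the content of (RD) is to dominate the operator norm --- a priori controlled only by the $L^1$-norm --- by a weighted $L^2$-norm, and this forces the length function to interact nontrivially with the geometry; here the finiteness of $\nu$ makes the $L^1$- and $L^2$-norms comparable, so the length function plays no essential role. The one point that deserves care is the precise meaning of ``bounded'' for the left regular representation, namely continuity with respect to the $L^1$-norm on $\H\gh$; under that reading the argument is just a finite-measure Cauchy--Schwarz estimate.
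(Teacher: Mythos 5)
Your proof is correct and is essentially identical to the paper's own argument: both interpret boundedness of $\la$ as an $L^1$-estimate $\|\la(f)\|\leq C\|f\|_1$, apply Cauchy--Schwarz on the finite measure space $(\hg,\nu)$ to get $\|f\|_1\leq\nu(\hg)^{1/2}\|f\|_2$, and then compare $\|f\|_2$ with $\|f\|_{s,l}$. The only cosmetic difference is that the paper takes $l$ to be the zero length function (giving $\|f\|_2=\|f\|_{s,l}$), whereas you take the $\{0,1\}$-valued length function of Example \ref{exa:lengthfunctions}(i) (giving $\|f\|_2\leq\|f\|_{s,l}$), which changes nothing of substance.
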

\begin{proof}
Let $C>0$ be the norm of $\la$ and let $l$ be the zero length function. Then for given $f\in H\gh$ and $s>0$, we have
\[
\|\la(f)\|\leq C\|f\|_1\leq C \nu(\hg)^{1/2} \|f\|_2 =C \nu(\hg)^{1/2} \|f\|_{s,l},
\]
where the last inequality follows from Proposition 6.12 of \cite{folland-ra}.
\end{proof}

\begin{example}
\label{exa:com-cocompact-RD}
A class of examples satisfying the assumptions of the above proposition consists of Hecke pairs $\gh$, where $H$ contains a cocompact closed normal subgroup of $G$, see Example 4.3(ii) and Remark 6.3 of \cite{s4}.
\end{example}

There are several equivalent definitions for property (RD) which are easier to work with. In order to discuss them, we need some observations and notations.
\begin{remark}
\label{rem:equi-length-RD} Let $l_1$ and $l_2$ be two length functions on a Hecke pair $\gh$. If $l_1$ dominates $l_2$ and $\gh$ has (RD) with respect to $l_2$, then it has (RD) with respect to $l_1$ too. Therefore when $l_1$ and $l_2$ are equivalent, $\gh$ has (RD) with respect to $l_1$ if and only if it has (RD) with respect to $l_2$.
\end{remark}

Assume $l$ is a length function on a Hecke pair $\gh$. For every $r\geq 0$, we set
\[
B_{r,l}\gh:=\{xH\in \hg; l(x)\leq r\},
\]
\[
C_{r,l}\gh:=\{xH\in \hg; r\leq l(x)< r+1 \}.
\]
When $H$ is the trivial subgroup, we simply denote the above sets by $B_{r,l}(G)$ and $C_{r,l}(G)$, respectively. In the rest of this paper, the subscript $+$ in $\H_+\gh$, $L^2_+(\hg)$, etc., means that we are considering only non-negative real functions.
\begin{proposition}
\label{prop:RDdefinitions}
Let $l$ be a locally bounded length function on a Hecke pair $\gh$. If the left regular representation $\la:\H\gh\to B(L^2(\hg))$ is bounded, then the following conditions are equivalent:
\begin{itemize}
\item[(i)] The Hecke pair $\gh$ has property (RD) with respect to $l$.
\item[(ii)] There exists a polynomial $P$ such that for every $r>0$, if the support of a function $f\in \H_+\gh$ is contained in $B_{r,l}\gh$, then we have
    \[
    \|\la(f)\|\leq P(r)\|f\|_2.
    \]
\item[(iii)] There exists a polynomial $P$ such that for every $r>0$, if the support of a function $f\in \H_+\gh$ is contained in $B_{r,l}\gh$ and $\xi\in L^2_+(\hg)$, then we have
    \[
    \|f\ast \xi\|_2\leq P(r)\|f\|_2 \|\xi \|_2.
    \]
\end{itemize}
\end{proposition}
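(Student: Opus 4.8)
The plan is to prove the chain of implications $(i) \Rightarrow (iii) \Rightarrow (ii) \Rightarrow (i)$, which is the standard cyclic strategy for establishing equivalences of this type. The heart of the matter is the equivalence between the operator-norm bound defining property (RD) and the pointwise convolution estimate for functions supported on balls, and the reductions to non-negative functions are what make the argument work cleanly.

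For the implication $(i) \Rightarrow (iii)$, I would start from the defining inequality $\|\la(f)\| \leq c\|f\|_{s,l}$ for all $f \in \H\gh$. If $f \in \H_+\gh$ has support contained in $\brl\gh$, then for every $x$ in the support we have $l(x) \leq r$, so the weight satisfies $(1+l(x))^{2s} \leq (1+r)^{2s}$. Integrating gives $\|f\|_{s,l} \leq (1+r)^s \|f\|_2$, and hence $\|\la(f)\| \leq c(1+r)^s\|f\|_2$. Since $\|f \ast \xi\|_2 \leq \|\la(f)\| \|\xi\|_2$ by definition of the operator norm, setting $P(r) := c(1+r)^s$ yields (iii). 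This step is essentially a direct substitution; the role of local boundedness of $l$ is precisely to guarantee that $\|f\|_{s,l}$ is finite and that this manipulation is legitimate.

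The implication $(iii) \Rightarrow (ii)$ is immediate: taking the supremum of $\|f\ast \xi\|_2 / \|\xi\|_2$ over all $\xi \in L^2_+(\hg)$ recovers a bound on $\|\la(f)\|$, provided one knows that the operator norm of $\la(f)$ for a non-negative $f$ is computed by testing against non-negative $\xi$ alone. This last point is the \emph{one subtlety I expect to require genuine care}: I would argue that since $f \geq 0$, the kernel of the convolution operator $\la(f)$ has non-negative entries, so by a standard positivity argument (replacing an arbitrary $\xi$ by $|\xi|$ and using $|f\ast \xi| \leq f \ast |\xi|$ pointwise) the norm is attained, or approximated, on $L^2_+(\hg)$. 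This reduces (ii) to (iii) with the same polynomial $P$.

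Finally, for $(ii) \Rightarrow (i)$, I would fix an arbitrary $f \in \H\gh$ and decompose it dyadically or by integer level sets according to the length function: write $f = \sum_{r \in \n} f_r$ where $f_r$ is supported on the annulus $C_{r,l}\gh$, so that $f_r$ has support in $\brl[r+1]\gh$. Applying (ii) to each $|f_r|$ and using the triangle inequality for the operator norm gives $\|\la(f)\| \leq \sum_r P(r+1)\|f_r\|_2$. The classical trick is then to apply Cauchy--Schwarz against the weight: writing $\|f_r\|_2 = (1+r)^{-s}\big[(1+r)^s\|f_r\|_2\big]$ and choosing $s$ large enough that $\sum_r P(r+1)^2(1+r)^{-2s}$ converges, one obtains $\|\la(f)\| \leq c \big(\sum_r (1+r)^{2s}\|f_r\|_2^2\big)^{1/2}$, and the bracketed sum is comparable to $\|f\|_{s,l}^2$. \textbf{This is the step I expect to be the main obstacle}, both because it requires reducing to non-negative $f$ (so that $\|\la(f)\| \leq \|\la(|f|)\|$, which again rests on the positivity of the convolution kernel together with boundedness of $\la$), and because one must verify that the integer-level-set decomposition interacts correctly with the weighted norm so that $\sum_r (1+r)^{2s}\|f_r\|_2^2$ is bounded by a constant multiple of $\|f\|_{s',l}^2$ for a suitable exponent $s'$. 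Controlling these constants and the choice of exponent is the technical core, but it is exactly the locally compact analogue of Jolissaint's original argument, and local boundedness of $l$ ensures each $f_r$ lies in $\H\gh$ with finite weighted norm.
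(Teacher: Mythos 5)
Your cyclic structure and the positivity reductions mirror the paper's approach (the paper defers the full equivalence to Proposition 2.10 of \cite{s1} and writes out only (ii)$\Rightarrow$(i)), and your implications (i)$\Rightarrow$(iii)$\Rightarrow$(ii) are correct, modulo the cosmetic point that $c(1+r)^s$ is a polynomial only after replacing $s$ by $\lceil s\rceil$. But your (ii)$\Rightarrow$(i) has a genuine gap, and it sits exactly at the step the paper's written-out proof exists to handle. By definition $\H\gh$ consists of \emph{continuous} compactly supported right-$H$-invariant functions, while the length function $l$ is only Borel; hence the annular truncations $f_r=f\cdot\chi_{C_{r,l}\gh}$ are in general not continuous, so $|f_r|\notin\H_+\gh$ and hypothesis (ii) simply does not apply to them. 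Your closing claim that ``local boundedness of $l$ ensures each $f_r$ lies in $\H\gh$'' is false for non-discrete pairs; local boundedness only guarantees that finitely many annuli meet the support of $f$, i.e.\ that the decomposition is a finite sum.

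This is precisely where the standing hypothesis that $\la$ is bounded must be used, and your argument never invokes it at the point where it is needed (you mention it only for the reduction $\|\la(f)\|\leq\|\la(|f|)\|$, which in fact needs no boundedness, just the pointwise bound $|f\ast\xi|\leq|f|\ast|\xi|$). In the paper's proof, continuity of $\la$ with respect to the $L^1$-norm lets one extend inequality (ii) from $\H_+\gh$ to its closure in the $L^1$-norm, which contains each truncation $f\chi_n$; only then is the estimate $\|\la(f\chi_n)\|\leq P(n)\|f\chi_n\|_2$ legitimate, after which your weighted Cauchy--Schwarz summation (identical to the paper's) goes through. Without this extension step the decomposition argument collapses, except when $\gh$ is discrete, where every function is automatically continuous --- which is exactly why the paper remarks that for discrete Hecke pairs the proof works without assuming $\la$ bounded.
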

\begin{proof}
The proof of the above proposition is the same as Proposition 2.10 of \cite{s1}. We only prove the implication ``(ii)$\Rightarrow$ (i)'' to explain why the conditions of the locally boundedness of $l$ and the boundedness of the left regular representation are necessary.

Assume (ii) holds for some polynomial $P$. Without loss of generality, it is enough to prove (i) for any given $f\in \H_+\gh$. Since the support of $f$ is compact and $l$ is locally bounded, there is some $r>0$ such that $supp(f)\subseteq B_{r,l} \gh$. Find two positive numbers $C,s$ such that $P(n)\leq Cn^{s-1}$ for all $n\in \n$. For every $n\in \n$, let $\chi_n$ be the characteristic function of $C_{n-1,l}\gh$ which is a right $H$-invariant function. Since $\la$ is continuous, (ii) holds for every function in the closure of $\H_+\gh$ with respect to the $L^1$-norm. Therefore since $f\chi_n\in L^1\gh$ for all  $n\in \n$, we have
\[
\|\la(f\chi_n)\|\leq P(n)\|f\chi_n\|_2\leq C n^{s-1} \|f\chi_n\|_2.
\]
Thus we get
\[
\|\la(f)\|=\|\sum_{n=1}^\infty \la(f\chi_n)\|\leq C\sum_{n=1}^\infty n\inv n^{s} \|f\chi_n\|_2.
\]
By the Cauchy-Schwartz inequality, we obtain
\[
\|\la(f)\|\leq C' \left( \sum_{n=1}^\infty n^{2s} \|f\chi_n\|_2^2 \right)^{1/2},
 \]
where $C'=C \left( \sum_{n=1}^\infty n^{-2} \right)^{1/2}$. On the other hand, for every $n\in \n$ and $Hg\in C_{n-1,l}\gh$, we have
\[
\|f\chi_n\|_2^2= \int_{C_{n-1,l}\gh }|f(Hg)|^2 \d\nu(Hg),
\]
and also $n\leq l(g)+1$. Thus we obtain
\[
\|\la(f)\|\leq C' \left( \sum_{n=1}^\infty \int_{C_{n-1,l}\gh }|f(Hg)|^2 (l(g)+1)^{2s} \d\nu(Hg)  \right)^{1/2}=C'\|f\|_{s,l}.
\]
\end{proof}
When $\gh$ is a discrete Hecke pair, the above proof works even without using the fact that the left regular representation $\la$ is bounded. That is why the boundedness of $\la$ was not assumed in Proposition 2.10 of \cite{s1}. For the next theorem, we need to borrow Lemma 3.5 of \cite{chatpitsaloff}. However, we have to change its statement slightly. We also do not assume that $\eta$ is normalized, so various powers of $\eta(H)$ appear in our formulas.

\begin{lemma}
\label{lem:chat3-5} Let $H$ be a compact subgroup of a locally compact group $G$. For every $\xi\in L^2(G)$ and $f\in C_c(G)$, define $\, _H\xi\in L^2(G)$ and $\fh\in C_c(G)$ by
\begin{eqnarray*}
_H\xi(x)&:=&\left(\int_H|\xi (hx)|^2 d\eta(h)) \right)^{1/2},\\
\fh(x)&:=&\left(\int_H \int_H|f(hxk)|^2 d\eta(h)d\eta(k) \right)^{1/2},
\end{eqnarray*}
for all $x\in G$. Then we have the following statements:
\begin{itemize}
\item[(i)] $\|_H\xi\|_2= \eta(H)^{1/2} \|\xi\|_2$.
\item[(ii)] If $f$ is bi-$H$-invariant, then $|f \ast \xi(x)|\leq  |f| \ast\,  _H\xi (x)$ for all $x\in G$.
\item[(iii)] $\|\fh\|_2= \eta(H) \|f \|_2$.
\item[(iv)] $\|\la(f)\|\leq \eta(H) \|\la(\fh)\|$, where $\la$ is the left regular representation of the locally compact group $G$.
\end{itemize}
\end{lemma}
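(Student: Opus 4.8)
The plan is to treat the four assertions in increasing order of difficulty, using throughout that since $H$ is compact its modular function is trivial and $\Delta_G|_H=1$, so that left translation by an element of $H$ preserves the right Haar measure $\mu$ (while right translations preserve $\mu$ automatically). For (i), I would expand $\|{}_H\xi\|_2^2=\int_G\int_H|\xi(hx)|^2\,d\eta(h)\,d\mu(x)$, apply Tonelli to interchange the order of integration (the integrand is non-negative), and for each fixed $h\in H$ substitute $u=hx$; since $\Delta_G(h)=1$ this substitution leaves $\mu$ invariant, so the inner integral equals $\|\xi\|_2^2$ and the outer $\eta$-integration contributes the factor $\eta(H)$. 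Part (iii) is the same computation carried out for the two-sided average: writing $\|\fh\|_2^2$ as a triple integral over $H\times H\times G$, I would first remove the right translation by $k$ using right-invariance of $\mu$, then the left translation by $h$ using $\Delta_G(h)=1$, leaving $\|f\|_2^2$ inside and two copies of $\eta(H)$ outside, hence $\eta(H)^2$.

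For (ii), I would exploit the bi-$H$-invariance of $f$ to reorganize the convolution. Since $f(x(hy)^{-1})=f(xy^{-1}h^{-1})=f(xy^{-1})$, decomposing $f\ast\xi(x)=\int_G f(xy^{-1})\xi(y)\,d\mu(y)$ over $\hg$ by Weil's formula moves the $\xi$-dependence into an inner integral $\int_H\xi(hy)\,d\eta(h)$ multiplied by the $H$-invariant factor $f(xy^{-1})$. Applying the Cauchy--Schwarz inequality to this inner $H$-integral replaces it by a multiple of ${}_H\xi(y)$, and reassembling the $\hg$-integral into a convolution over $G$ yields the pointwise estimate $|f\ast\xi(x)|\le |f|\ast{}_H\xi(x)$, up to the appropriate power of $\eta(H)$ coming from the un-normalized measure.

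Part (iv) is the real work, and I would prove it by testing the bilinear form rather than arguing pointwise. Fix $\xi,\zeta\in C_c(G)$ and, after the substitution $w=xy^{-1}$, write $\langle f\ast\xi,\zeta\rangle=\int_G\int_G f(w)\,\xi(y)\,\overline{\zeta(wy)}\,d\mu(w)\,d\mu(y)$. Passing to absolute values, I would insert an averaging over $H\times H$ by the simultaneous substitution $w\mapsto hwk$, $y\mapsto k^{-1}y$, which is measure-preserving (again because $\Delta_G|_H=1$) and transforms the product as $wy\mapsto hwy$, so the integrand becomes $|f(hwk)|\,|\xi(k^{-1}y)|\,|\zeta(hwy)|$. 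Applying Cauchy--Schwarz to the $H\times H$-integral, with $|f(hwk)|$ on one side and $|\xi(k^{-1}y)|\,|\zeta(hwy)|$ on the other, the first factor assembles exactly into $\fh(w)$, while the second factor separates—the $\xi$-term depending only on $k$ and the $\zeta$-term only on $h$—into ${}_H\xi(y)\,{}_H\zeta(wy)$. The resulting expression is a constant multiple of $\langle\fh\ast{}_H\xi,{}_H\zeta\rangle$, which is at most $\|\la(\fh)\|\,\|{}_H\xi\|_2\,\|{}_H\zeta\|_2$; finally I would invoke (i) to rewrite $\|{}_H\xi\|_2$ and $\|{}_H\zeta\|_2$ in terms of $\|\xi\|_2$ and $\|\zeta\|_2$, and take the supremum over $\zeta$ to recover the operator-norm inequality $\|\la(f)\|\le \eta(H)\|\la(\fh)\|$.

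The main obstacle is organizing the $H\times H$-averaging in (iv): one must choose the substitution on the two group variables $w$ and $y$ so that the product $wy$ transforms compatibly, and so that the Cauchy--Schwarz split produces precisely $\fh$ on one side and a clean product of the single-variable averages ${}_H\xi$ and ${}_H\zeta$ on the other. Once this substitution is set up correctly, everything else is routine; because $\eta$ is not assumed normalized, the powers of $\eta(H)$ generated by (i), by the normalization of the averaging, and by Cauchy--Schwarz must be tracked carefully to arrive at the stated constants.
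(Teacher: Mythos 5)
Your proposal is correct in substance, but note that the paper contains no proof of this lemma to compare against: it is borrowed from Lemma 3.5 of \cite{chatpitsaloff} (stated there with $\eta(H)=1$), with only a remark that item (ii) follows by modifying a calculation in that proof. So you are reconstructing an argument the paper defers to a citation, and your reconstruction uses the right mechanisms. Parts (i) and (iii) by Tonelli together with invariance of the right Haar measure under right translations and under left translations by elements of $H$ (valid since $\Delta_G|_H=1$ for compact $H$) are correct. Part (ii) via the Weil decomposition and Cauchy--Schwarz on the inner $H$-integral is correct, as is the bilinear-form argument for (iv) with the simultaneous substitution $w\mapsto hwk$, $y\mapsto k\inv y$ and the Cauchy--Schwarz split over $H\times H$; one point you leave implicit is that identifying $\bigl(\int_H|\xi(k\inv y)|^2\,d\eta(k)\bigr)^{1/2}$ with ${}_H\xi(y)$ uses inversion-invariance of Haar measure on the compact group $H$.

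The one concrete issue is the constants, and you should not expect the bookkeeping to ``arrive at the stated constants'': carried out carefully, your own argument yields $|f\ast\xi(x)|\leq \eta(H)^{-1/2}\,|f|\ast{}_H\xi(x)$ in (ii) (a factor $\eta(H)^{1/2}$ from Cauchy--Schwarz times a factor $\eta(H)\inv$ from reassembling the $\hg$-integral into a convolution over $G$), and $\|\la(f)\|\leq \eta(H)\inv\,\|\la(\fh)\|$ in (iv) (a factor $\eta(H)^{-2}$ from the normalized $H\times H$-averaging times a factor $\eta(H)$ from applying (i) to both $\xi$ and $\zeta$). These, and not the printed ones, are the correct constants: taking $H=\{e\}$ with $\eta(H)=c<1$ and $0\neq f\geq 0$ gives $\fh=cf$ and ${}_H\xi=c^{1/2}|\xi|$, so the printed (iv) would force $\|\la(f)\|\leq c^{2}\|\la(f)\|$, which is false. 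The printed constants are an artifact of transporting the normalized statement of \cite{chatpitsaloff} to un-normalized $\eta$, and they are valid only when $\eta(H)\geq 1$. The discrepancy is harmless for the way the lemma is used (the proof of Theorem \ref{thm:compact-sub-RD} only needs these inequalities up to some fixed power of $\eta(H)$), but be aware that your method proves the corrected inequalities rather than literally the displayed ones.
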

One notes that the item (ii) of the above lemma is proved by modifying a calculation in the proof of Lemma 3.5 of \cite{chatpitsaloff}. The following theorem is a crucial step in studying property (RD) of Hecke pairs $\gh$ when $H$ is a compact subgroup of $G$. First we need to recall some notations and formulas from Section 4 of \cite{s4}. For every $f\in C_c(H\ba G)$, we defined $\tilde{f}\in C_c(G)$ by setting $\tilde{f}(x):=f(Hx)$ for all $x\in G$. Then $\|\tilde{f}\|^2_2=\eta(H) \|f\|_2^2$, where the $L^2$-norms are taken in $L^2(G)$ and $L^2(H\ba G)$. It was also shown that, for every $f\in \H\gh$ and $g\in L^2(H\ba G)$, we have $\eta(H) \widetilde{f\ast g}=\tilde{f}\ast\tilde{g}$.

\begin{theorem}
\label{thm:compact-sub-RD}
Let $H$ be a compact subgroup of a locally compact group $G$. Then the group $G$ has (RD) if and only if the Hecke pair $\gh$ has (RD).
\end{theorem}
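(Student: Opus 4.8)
The plan is to move all computations to the group $G$ via the correspondence $f\mapsto\tilde f$ between $\H\gh$ and the bi-$H$-invariant functions in $C_c(G)$, exploiting the two identities recalled just before the statement, namely $\|\tilde f\|_2^2=\eta(H)\|f\|_2^2$ and $\eta(H)\widetilde{f\ast g}=\tilde f\ast\tilde g$ for $f\in\H\gh$, $g\in L^2(\hg)$. A preliminary step makes both sides speak about the same length function. A length function witnessing (RD) for the Hecke pair is by definition already bi-$H$-invariant; a length function witnessing (RD) for the group $G$ need not kill $H$, but since $H$ is compact it is bounded on $H$, so by Lemma \ref{lem:comp-length} it may be replaced by an equivalent bi-$H$-invariant locally bounded length function $l$, and by Remark \ref{rem:equi-length-RD} this preserves (RD). In either direction I may therefore work with a single bi-$H$-invariant (hence well defined on $\hg$) locally bounded length function $l$, using the same symbol $\|\cdot\|_{s,l}$ for the weighted $L^2$-norm on $\H\gh$ and for its analogue on $C_c(G)$; since $|\tilde f(x)|^2(1+l(x))^{2s}$ is left-$H$-invariant, Weil's formula gives the auxiliary identity $\|\tilde f\|_{s,l}^2=\eta(H)\|f\|_{s,l}^2$.

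For the implication ``$G$ has (RD) $\Rightarrow\gh$ has (RD)'' I would feed the functions $\tilde f$, $f\in\H\gh$, into the rapid decay inequality for $G$. From $\eta(H)\widetilde{f\ast\xi}=\tilde f\ast\tilde\xi$ and $\|\tilde\cdot\|_2^2=\eta(H)\|\cdot\|_2^2$ one obtains $\|f\ast\xi\|_2\le\eta(H)^{-1}\|\la_G(\tilde f)\|\,\|\xi\|_2$ for every $\xi\in L^2(\hg)$ (this is just the operator-norm bound $\|\tilde f\ast\tilde\xi\|_2\le\|\la_G(\tilde f)\|\,\|\tilde\xi\|_2$ applied to $\tilde\xi\in L^2(G)$), whence $\|\la(f)\|\le\eta(H)^{-1}\|\la_G(\tilde f)\|$. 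Combining this with $\|\la_G(\tilde f)\|\le c\|\tilde f\|_{s,l}$ and $\|\tilde f\|_{s,l}=\eta(H)^{1/2}\|f\|_{s,l}$ yields $\|\la(f)\|\le c\,\eta(H)^{-1/2}\|f\|_{s,l}$, so $\gh$ has (RD) with respect to $l$.

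The converse ``$\gh$ has (RD) $\Rightarrow G$ has (RD)'' is where the real work lies, since an arbitrary $\f\in C_c(G)$ is not bi-$H$-invariant and therefore does not come from $\H\gh$. The device to remedy this is Lemma \ref{lem:chat3-5}: by part (iv), $\|\la_G(\f)\|\le\eta(H)\|\la_G({}_H\f_H)\|$, and the averaged function ${}_H\f_H$ is now bi-$H$-invariant and non-negative, hence equals $\tilde f$ for some $f\in\H_+\gh$. To bound $\|\la_G(\tilde f)\|$ by the Hecke convolution norm I would use part (ii): for $\xi\in L^2(G)$ one has $|\tilde f\ast\xi|\le\tilde f\ast{}_H\xi$ pointwise, where ${}_H\xi$ is left-$H$-invariant and so equals $\tilde\psi$ for some $\psi\in L^2_+(\hg)$ with $\|\psi\|_2=\|\xi\|_2$ by part (i); then $\tilde f\ast{}_H\xi=\eta(H)\widetilde{f\ast\psi}$ gives $\|\la_G(\tilde f)\|\le\eta(H)^{3/2}\|\la(f)\|$. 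Applying (RD) for $\gh$ and then the weighted-norm comparison $\|f\|_{s,l}=\eta(H)^{1/2}\|\f\|_{s,l}$ produces $\|\la_G(\f)\|\le c\,\eta(H)^{3}\|\f\|_{s,l}$, so $G$ has (RD) with respect to $l$.

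The weighted-norm comparison $\|f\|_{s,l}=\eta(H)^{1/2}\|\f\|_{s,l}$ for $\tilde f={}_H\f_H$ is the one spot that must be checked with care, and it is here that the hypotheses are genuinely used: unfolding the double integral defining ${}_H\f_H$ and using that $l$ is bi-$H$-invariant, so $l(hxk)=l(x)$, together with $\Delta_G|_H=1$ — which holds automatically because a continuous homomorphism from the compact group $H$ into $\r^+$ is trivial — lets one change variables $x\mapsto hxk$ without Jacobian and recover $\|\f\|_{s,l}^2$ from each inner integration. The main obstacle, then, is this converse direction: the passage from a general $\f$ to its bi-$H$-average via Lemma \ref{lem:chat3-5}, and the verification that neither the convolution-norm estimate nor the weighted-norm estimate degrades under that averaging. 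By contrast, the forward direction is a routine restriction of the group estimate to bi-$H$-invariant functions, and in both directions the (RD) constant is merely multiplied by an explicit power of $\eta(H)$, which is immaterial.
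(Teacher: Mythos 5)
Your proposal is correct and follows essentially the same route as the paper's own proof: both directions transfer computations between $\H\gh$ and bi-$H$-invariant functions on $G$ via $f\mapsto\tilde f$ (with the same $\eta(H)$-norm identities), after using Lemma \ref{lem:comp-length} and Remark \ref{rem:equi-length-RD} to arrange a single $H$-bi-invariant length function, and the converse direction rests on the averaging Lemma \ref{lem:chat3-5} exactly as in the paper. The only immaterial difference is bookkeeping: you work directly with the weighted norms $\|\cdot\|_{s,l}$, which obliges the extra (correctly verified, using bi-$H$-invariance of $l$ and $\Delta_G|_H=1$) identity $\|{}_H\f_H\|_{s,l}=\eta(H)\|\f\|_{s,l}$, whereas the paper routes the same estimates through the ball-support characterization of Proposition \ref{prop:RDdefinitions}.
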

\begin{proof}
Suppose $G$ has (RD) with respect to a locally bounded length function $l$ on $G$. By Lemma \ref{lem:comp-length} and Remark \ref{rem:equi-length-RD}, without loss of generality, we can assume that the kernel of $l$ contains $H$, and so $l$ is a locally bounded length function on the Hecke pair $\gh$ as well. Hence $B_{r,l}\gh=H\ba B_{r,l}(G)$ for all $r\geq 0$. Let $P$ be the polynomial, mentioned in Proposition \ref{prop:RDdefinitions}(iii), coming from property (RD) for $G$. Given $r\geq 0$, for every function $f\in \H_+\gh$ such that $supp(f)\subseteq B_{r,l}\gh$ and every function $\xi\in L^2_+(\hg)$, let $\tilde{f}\in C_c(G)$ and $\tilde{\xi}\in L^2(G)$ be as above. Then $supp(\tilde{f})\subseteq B_{r,l}(G)$ and we have
\begin{eqnarray*}
\|f\ast \xi\|_2^2 &=& \frac{1}{\eta(H)^3}\|\tilde{f}\ast \tilde{\xi}\|_2^2\\
&\leq & \frac{P(r)^2}{\eta(H)^3}\|\tilde{f}\|_2^2 \| \tilde{\xi}\|_2^2\\
&= & \frac{P(r)^2}{\eta(H)}\|f\|_2^2 \| \xi\|_2^2.
\end{eqnarray*}
This proves that the Hecke pair $\gh$ has (RD) with respect to $l$.

Conversely, assume that the Hecke pair $\gh$ possesses (RD) with respect to a locally bounded length function $l$ and let $P$ be the corresponding polynomial. Given $r\geq 0$, assume that a function $f\in C_{c+}(G)$ subject to the condition $supp(f)\subseteq B_{r,l}(G)$ and a function $\xi\in L^2_+(G)$ are given. In the first step, we assume that $f$ is bi-$H$-invariant and $\xi$ is left $H$-invariant. We define $\hat{f}\in \H\gh$ and $\hat{\xi}\in L^2(\hg)$ by $\hat{f}(Hx):=f(x)$ and $\hat{\xi}(Hx):= \xi(x)$ for all $Hx\in \hg$, respectively. It is straightforward to check that
\[
\|\hat{f}\|_2^2=\frac{1}{\eta(H)}\|f\|_2^2, \qquad \|\hat{\xi}\|_2^2=\frac{1}{\eta(H)}\|\xi \|_2^2.
\]
For every $x\in G$, we compute
\begin{eqnarray*}
\widetilde{\hat{f}\ast \hat{\xi}} (x)&=&\int_\hg \hat{f}(Hxy\inv) \hat{\xi} (Hy)d\nu(Hy)\\
&=&\int_\hg f(xy\inv) \xi (y) d\nu(Hy)\\
&=& \frac{1}{\eta(H)} \int_\hg \left( \int_H f(xy\inv) \xi(y) d\eta(h)\right) d\nu(Hy)\\
&=& \frac{1}{\eta(H)} \int_\hg \left( \int_H f(xy\inv h\inv) \xi(hy) d\eta(h)\right) d\nu(Hy)\\
&=& \frac{1}{\eta(H)} \int_G f(xy\inv) \xi(y) d\mu(y)\\
&=&\frac{1}{\eta(H)}f\ast \xi (x).
\end{eqnarray*}
Since $supp(\hat{f})\subseteq B_{r,l}\gh$, we have
\begin{eqnarray*}
\|f\ast \xi\|_2^2 &=&\eta(H)^2\| \widetilde{\hat{f}\ast \hat{\xi}} \|_2^2\\
&=& \eta(H)^3 \| \hat{f}\ast \hat{\xi} \|_2^2\\
&\leq & \eta(H)^3 P(r)^2 \| \hat{f}\|_2^2 \|\hat{\xi} \|_2^2\\
&=&\eta(H) P(r)^2 \|f\|_2^2 \|\xi \|_2^2.
\end{eqnarray*}
Therefore the condition of Proposition \ref{prop:RDdefinitions}(iii) with the polynomial $\sqrt{\eta(H)}P(r)$ holds in this case. Now, let $f$ and $\xi$ be arbitrary (no $H$-invariance is assumed). Using Lemma \ref{lem:chat3-5} and the above calculation, we have
\begin{eqnarray*}
\|\fh\ast\xi\|_2^2&\leq& \|\fh \ast \, _H\xi\|_2^2 \\
&\leq& \eta(H) P(r)^2 \|\fh\|_2^2 \|_H\xi \|_2^2\\
&=&\eta(H)^2 P(r)^2 \|\fh\|_2^2 \|\xi \|_2^2.
\end{eqnarray*}
Hence,
\[
\|\la(\fh)\|\leq \eta(H) P(r) \|\fh\|_2=\eta(H)^2 P(r) \|f\|_2.
\]
By Lemma \ref{lem:chat3-5}(iv), this implies $\|\la(f)\|\leq \eta(H)^3 P(r) \|f\|_2$. Therefore, by Proposition \ref{prop:RDdefinitions}(ii), $G$ has (RD).
\end{proof}

The above theorem can be thought of as the continuous version of Theorem 2.11 of \cite{s1}. Since Theorem 2.2 of \cite{s2} (see also Corollary \ref{cor:commen-RD}) is also a generalization of the latter theorem, it would be interesting to state and prove the generalization of the above theorem for {\it compactly commensurable subgroups}, i.e. two closed subgroups whose intersection is a cocompact subgroup of both. So far in \cite{s4}, we have only considered compact and cocompact subgroups of locally compact groups to define non-discrete Hecke pairs. On the other hand, a compactly commensurable subgroup of a compact (resp. cocompact) subgroup is again a compact (resp. cocompact) subgroup. Hence, before generalizing the above theorem for compactly commensurable subgroups, we first need to define and study other non-discrete Hecke pairs besides the cases studied in \cite{s4}.

A version of the following corollary also appeared in \cite{chatpitsaloff} as Lemma 3.4.

\begin{corollary}
\label{cor:subcompactnormal}
Let $K$ be a compact normal subgroup of a locally compact group $G$. Then the group $G$ has (RD) if and only if the quotient group $G/K$ has (RD).
\end{corollary}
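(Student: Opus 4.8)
The plan is to combine Theorem \ref{thm:compact-sub-RD} with the normal-subgroup reduction of Proposition \ref{prop:normalcomponent}. Since a compact normal subgroup is in particular a compact subgroup, Theorem \ref{thm:compact-sub-RD} applies with $H=K$ and gives at once that $G$ has (RD) if and only if the Hecke pair $\gk$ has (RD). The whole content of the corollary therefore reduces to showing that $\gk$ has (RD) if and only if the group $G/K$ has (RD). Here one should note that for a trivial closed subgroup the Hecke algebra $\H(G/K,\{e\})$ is just $C_c(G/K)$, the left regular representation is that of the group $G/K$, and a length function on the pair $(G/K,\{e\})$ is simply a length function on the group; hence property (RD) of the pair $(G/K,\{e\})$ and property (RD) of the group $G/K$ literally coincide.

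To effect the reduction I would invoke Proposition \ref{prop:normalcomponent} with $N=H=K$, so that $G'=G/K$ and $H'=K/K=\{e\}$. The hypothesis $\Delta_G|_K=\Delta_K=1$ required there is automatic in this setting: $K$ is compact, hence unimodular ($\Delta_K=1$), and $\Delta_G(K)$ is a compact subgroup of the multiplicative group $\r^+$ and so is trivial, giving $\Delta_G|_K=1$. The proposition then yields an isomorphism of Hecke algebras $\H\gk\cong\H(G/K,\{e\})$ and an isomorphism of reduced Hecke \cs-algebras $\csr\gk\cong\csr(G/K,\{e\})$; in particular the two convolution norms are matched under this correspondence.

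It remains to transport the remaining ingredients of property (RD) across this identification. By Remark \ref{rem:len-schlich-reduced}(i) --- whose comment applies, as noted there, to any normal subgroup of $G$ contained in $H$, here $K$ itself --- length functions on $\gk$ correspond bijectively to length functions on $(G/K,\{e\})$, the correspondence preserving local boundedness and properness. Normality of $K$ lets us identify the homogeneous space $K\ba G$ with the quotient group $G/K$, under which the invariant measure $\nu$ on $K\ba G$ agrees with a Haar measure on $G/K$ up to a positive constant; consequently, for corresponding functions the weighted $L^2$-norms $\|\cdot\|_{s,l}$ agree up to a fixed multiplicative constant. Feeding the equality of convolution norms and this comparison of weighted norms into Definition \ref{def:propertyRD}(ii), the defining inequality holds for $\gk$ with a locally bounded length function $l$ if and only if it holds for $G/K$ with the induced length function. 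Thus $\gk$ has (RD) if and only if $G/K$ has (RD), which combined with the first step proves the corollary.

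The only step requiring genuine care --- the main obstacle --- is checking that the algebraic isomorphism furnished by Proposition \ref{prop:normalcomponent} is compatible with the \emph{analytic} data of property (RD): one must verify that it intertwines the weighted $L^2$-norms, and not merely the convolution product and its norm, and that it respects the length-function correspondence. This is essentially bookkeeping, resting on the normality of $K$ (to identify $K\ba G$ with $G/K$ and to match the invariant measures) and on the compactness of $K$ (to keep every measure-normalization factor finite and bounded away from zero).
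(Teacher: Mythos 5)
Your proposal is correct and is essentially the argument the paper intends: the corollary is stated without proof immediately after Theorem \ref{thm:compact-sub-RD}, to be read as that theorem with $H=K$ combined with the identification of the Hecke pair $\gk$ with the group $G/K$ (the normal-subgroup reduction the paper codifies in Proposition \ref{prop:normalcomponent} and Lemma \ref{lem:reduced-RD}). Your verification of the hypotheses ($\Delta_K=1$ and $\Delta_G|_K=1$ by compactness) and your bookkeeping on length functions and weighted $L^2$-norms fill in exactly the details the paper leaves implicit.
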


As another application of Theorem \ref{thm:compact-sub-RD}, we establish an equivalence between the property (RD) of a reduced discrete Hecke pair and the property (RD) of the totally disconnected locally compact group appearing in its Schlichting completion.
\begin{theorem}
\label{thm:RD-schlich}
Let $\gh$ be a reduced discrete Hecke pair and let $\ghb$ be its Schlichting completion. Then the followings are equivalent:
\begin{itemize}
\item[(i)] The Hecke pair $\gh$ has (RD).
\item[(ii)] The Hecke pair $\ghb$ has (RD).
\item[(iii)] The totally disconnected locally compact group $\gb$ has (RD).
\end{itemize}
\end{theorem}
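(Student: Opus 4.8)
The plan is to prove the chain of equivalences by establishing $(i)\Leftrightarrow(ii)$ and $(ii)\Leftrightarrow(iii)$ separately, relying on the two main tools assembled earlier in the paper: the isomorphism of Hecke algebras coming from the Schlichting completion (Lemma~\ref{lem:bijective-Schlich}) and the compact-subgroup theorem (Theorem~\ref{thm:compact-sub-RD}).

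First I would dispose of $(i)\Leftrightarrow(ii)$, which should be essentially formal. By Lemma~\ref{lem:bijective-Schlich}(i), the map $\alpha$ induces an isometric isomorphism $\ell^2(\hg)\cong\ell^2(\hb\ba\gb)$, and by parts (ii) and (iii) the map $\beta$ induces an isometric isomorphism of Hecke algebras $\H\gh\cong\H\ghb$ intertwining the convolution products. Consequently the two left regular representations are unitarily equivalent, so the convolution norm $\|\la(f)\|$ is preserved under $f\mapsto\beta(f)$. To transfer the inequality in Definition~\ref{def:propertyRD}(ii) I also need the weighted $L^2$-norms to match up; this is exactly what Remark~\ref{rem:len-schlich-reduced}(ii) supplies, giving a natural bijection between length functions on $\gh$ and on $\ghb$ that sends a length function $l$ to $\bar l$ with $|f(Hx)|=|\beta(f)(\hb x)|$ and $l(x)=\bar l(x)$. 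Under this correspondence $\|f\|_{s,l}=\|\beta(f)\|_{s,\bar l}$, so $\gh$ has (RD) with respect to $l$ exactly when $\ghb$ has (RD) with respect to $\bar l$. Since the bijection is surjective on length functions, $(i)\Leftrightarrow(ii)$ follows at the level of the existential statement of property (RD).

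For $(ii)\Leftrightarrow(iii)$ I would invoke Theorem~\ref{thm:compact-sub-RD} directly. The whole point of the Schlichting completion is that $\hb$ is a \emph{compact} (indeed compact open) subgroup of the locally compact group $\gb$. Thus $\gb$ and $\hb$ satisfy the hypothesis of Theorem~\ref{thm:compact-sub-RD}, which asserts that $\gb$ has (RD) if and only if the Hecke pair $\ghb$ has (RD). This is precisely the equivalence $(ii)\Leftrightarrow(iii)$, with no further work needed. Combining the two equivalences then yields the stated chain $(i)\Leftrightarrow(ii)\Leftrightarrow(iii)$.

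I expect the main obstacle to be the bookkeeping in the first equivalence rather than any conceptual difficulty: one must check that the isometric isomorphisms $\alpha$ and $\beta$ really do intertwine the left regular representations \emph{and} that the length-function correspondence of Remark~\ref{rem:len-schlich-reduced}(ii) respects local boundedness (so that the weighted norms are well defined on both sides). Local boundedness is automatic here because on the discrete pair $\gh$ every length function is locally bounded, and its transfer $\bar l$ is constant on $\hb$-cosets with $\hb$ open; so the weighted norm $\|\cdot\|_{s,\bar l}$ is legitimate. Once these compatibilities are recorded, the argument is a short two-line deduction, since all the analytic content has been pushed into Theorem~\ref{thm:compact-sub-RD} and Lemma~\ref{lem:bijective-Schlich}.
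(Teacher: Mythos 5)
Your proposal is correct and follows essentially the same route as the paper: the paper also obtains (i)$\Leftrightarrow$(ii) from Lemma~\ref{lem:bijective-Schlich} together with the length-function correspondence of Remark~\ref{rem:len-schlich-reduced}(ii), and (ii)$\Leftrightarrow$(iii) as an immediate application of Theorem~\ref{thm:compact-sub-RD} to the compact open subgroup $\hb$ of $\gb$. The only difference is that you spell out the bookkeeping (unitary equivalence of the regular representations, matching of weighted norms, local boundedness) that the paper leaves implicit.
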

\begin{proof}
Equivalence of (i) and (ii) follows from Remark \ref{rem:len-schlich-reduced}(ii) and Lemma \ref{lem:bijective-Schlich}. Equivalence of (ii) and (iii) follows from Theorem \ref{thm:compact-sub-RD}.
\end{proof}
For a similar result concerning the amenability and Haagerup property (a-T-menability) of Hecke pairs and their Schlichting completion see Proposition 5.1 of \cite{tzanev} and Proposition 4.5 of \cite{anan}, respectively. The following lemma which completes the above theorem follows from Remark \ref{rem:len-schlich-reduced}(i) and Proposition 2.12 of \cite{s4}:
\begin{lemma}
\label{lem:reduced-RD}
Let $\gh$ be a Hecke pair and let $N$ be a closed normal subgroup of $G$ which is contained in $H$. Put $G':=\frac{G}{N}$ and $H':=\frac{H}{N}$. Then the Hecke pair $\gh$ has (RD) if and only if the Hecke pair $(G',H')$ has (RD).

In particular, a discrete Hecke pair $\gh$ has (RD) if and only if its associated reduced discrete Hecke pair $\ghr$ has (RD).
\end{lemma}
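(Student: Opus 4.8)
The plan is to transport both the geometric datum (the length function) and the analytic datum (the convolution inequality) across the quotient map $q\colon G\to G'$, and to check that property (RD) is preserved in each direction simultaneously. Since $N\subseteq H\subseteq N_l$ and $N$ is normal, any length function $l$ on $\gh$ satisfies $l(gn)=l(g)=l(ng)$ for all $n\in N$: indeed $l(gn)\le l(g)+l(n)=l(g)$ because $l(n)=0$, and $l(g)\le l(gn)+l(n\inv)=l(gn)$, and symmetrically on the left. Hence $l$ is constant on cosets of $N$ and descends to a function $l'$ on $G'$; the length-function axioms for $l'$ and the inclusion $H'\subseteq N_{l'}$ follow at once from those for $l$, and conversely $l'\mapsto l'\circ q$ inverts this assignment. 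This is exactly the correspondence of Remark \ref{rem:len-schlich-reduced}(i), which is noted there to apply to any normal subgroup of $G$ contained in $H$. The map $q$ also induces a homeomorphism $\hg\cong H'\ba G'$ carrying $l$ to $l'$; because it sends compact sets to compact sets, $l$ is locally bounded (resp. proper) on $\hg$ if and only if $l'$ is locally bounded (resp. proper) on $H'\ba G'$.

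Next I would transfer the analytic structure. By Proposition \ref{prop:normalcomponent} together with Proposition 2.12 of \cite{s4}, the bijection $HgH\mapsto H'q(g)H'$ induces an isomorphism $\Phi\colon\H\gh\to\H(G',H')$ of Hecke algebras which intertwines the two left regular representations and extends to an isomorphism $\csr\gh\cong\csr(G',H')$; in particular $\|\la(f)\|=\|\la'(\Phi f)\|$ for every $f\in\H\gh$. The associated isometric isomorphism $L^2(\hg)\cong L^2(H'\ba G')$, arising from the correspondence of $G$-invariant measures under the homeomorphism above, carries $|f|^2$ to $|\Phi f|^2$ and matches the weight $(1+l(x))^{2s}$ with $(1+l'(x'))^{2s}$ at corresponding points. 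Hence $\|f\|_{s,l}$ equals a fixed constant multiple of $\|\Phi f\|_{s,l'}$.

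Combining these, I would conclude as follows. The pair $\gh$ has (RD) with respect to the locally bounded length function $l$, witnessed by constants $s,c>0$ with $\|\la(f)\|\le c\|f\|_{s,l}$ for all $f\in\H\gh$, precisely when $\|\la'(\Phi f)\|\le c'\|\Phi f\|_{s,l'}$ for all $\Phi f\in\H(G',H')$, i.e. when $(G',H')$ has (RD) with respect to $l'$. Since $l\mapsto l'$ is a bijection preserving local boundedness, it follows that $\gh$ has (RD) if and only if $(G',H')$ does. The final assertion is the special case $N=K_\gh$: here $K_\gh$ is a closed normal subgroup of $G$ contained in $H$, and $(G',H')=(G/K_\gh,H/K_\gh)=\ghr$ by the definition of the reduction.

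The main obstacle is the simultaneous compatibility required in the second paragraph: one must verify that the single isomorphism coming from \cite{s4} is at once an isometry of the $L^2$ spaces up to a fixed scalar \emph{and} an intertwiner of the left regular representations, so that the one inequality defining (RD) transfers without distortion. Once the invariant measures $\nu$ on $\hg$ and $\nu'$ on $H'\ba G'$ are pinned down to correspond under the homeomorphism induced by $q$, the matching of the weighted norms and of the convolution norms is routine bookkeeping; the only genuinely delicate point is the compatibility of these two measures and the constant relating them.
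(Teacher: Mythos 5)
Your proposal is correct and follows essentially the same route as the paper: the paper's (one-line) proof cites exactly the two ingredients you use, namely the length-function correspondence of Remark \ref{rem:len-schlich-reduced}(i) (which, as noted there, applies to any closed normal subgroup of $G$ contained in $H$) and the isomorphism of Hecke algebras, $L^2$-spaces and reduced Hecke \cs-algebras between $\gh$ and $(G',H')$ from Proposition 2.12 of \cite{s4} (cf. Proposition \ref{prop:normalcomponent}). Your write-up simply makes explicit the bookkeeping — the matching of invariant measures up to a scalar and the intertwining of the left regular representations — that the paper leaves implicit in those citations.
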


\begin{remark}
\label{rem:newobst} The only known obstruction for a discrete group $G$ to possess property (RD) is that when it is amenable, property (RD) is equivalent to polynomial growth, see Corollary 3.1.8 of \cite{j2}, and also see Proposition \ref{prop:amenRD} for the same result in the setting of Hecke pairs. In addition to this obstruction, unimodularity is another necessary condition for a locally compact group to possess property (RD), see Theorem 2.2 of \cite{JiSch}. The following corollary shows that the similar notion of relative unimodularity is necessary for a discrete Hecke pair to possess property (RD).

This suggests that the theory of locally compact groups can provide a better understanding of Hecke pairs. This have been the main point of several papers which use the Schlichting completion to analyze Hecke pairs and Hecke \cs-algebras, see for example \cite{tzanev, klq, anan} and our recent works \cite{s3, s4}. For another application of the Schlichting completion in the present paper see Proposition \ref{prop:f-ext-dis}.
\end{remark}

\begin{corollary}
\label{cor:unimod-RD}
If a discrete Hecke pair $\gh$ has (RD), then it must be relatively unimodular.
\end{corollary}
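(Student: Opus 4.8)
The plan is to push the problem through the Schlichting completion into the realm of locally compact groups, where the analogous necessity of unimodularity is already available (Theorem~2.2 of \cite{JiSch}), and then translate the resulting unimodularity back into relative unimodularity via the modular function of $\gb$.

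First I would reduce to the reduced case. By Lemma~\ref{lem:reduced-RD}, the pair $\gh$ has (RD) if and only if its reduction $\ghr$ has (RD). Since $K_\gh$ is normal and contained in every conjugate $gHg\inv$, a direct check gives $[H/K_\gh : (H\cap gHg\inv)/K_\gh] = [H : H\cap gHg\inv]$, so the Hecke indices $L$ and $R$ — and hence the relative modular function $\Delta_\gh$ — are unchanged by reduction. Thus $\gh$ is relatively unimodular if and only if $\ghr$ is, and we may assume $\gh$ is reduced with Schlichting completion $\ghb$. Now I invoke the equivalences already established: by Theorem~\ref{thm:RD-schlich}, the hypothesis that $\gh$ has (RD) forces the totally disconnected locally compact group $\gb$ to have (RD); by Theorem~2.2 of \cite{JiSch}, any locally compact group with (RD) is unimodular, so $\Delta_{\gb}\equiv 1$. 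It then remains to identify $\Delta_{\gb}$, restricted to the dense subgroup $G$, with $\Delta_\gh$. Since $\hb$ is a compact open subgroup, for each $g\in\gb$ the subgroup $\hb\cap g\hb g\inv$ is open of finite index in both $\hb$ and $g\hb g\inv$; writing $\mu$ for a Haar measure on $\gb$ one obtains
\[
\frac{\mu(g\hb g\inv)}{\mu(\hb)} = \frac{[g\hb g\inv : \hb\cap g\hb g\inv]}{[\hb : \hb\cap g\hb g\inv]}.
\]
For $g\in G$ the Schlichting correspondence of Lemma~\ref{lem:bijective-Schlich}(ii) identifies $[\hb : \hb\cap g\hb g\inv]$ with $L(g)$ and, after conjugating the pair by $g\inv$, identifies $[g\hb g\inv : \hb\cap g\hb g\inv]$ with $[\hb : \hb\cap g\inv\hb g]=L(g\inv)=R(g)$. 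Since the left-hand ratio above is $\Delta_{\gb}(g)^{\pm1}$, this yields $\Delta_{\gb}(g)=\Delta_\gh(g)^{\pm1}$, and $\Delta_{\gb}\equiv 1$ then forces $\Delta_\gh(g)=1$ for all $g\in G$, i.e.\ relative unimodularity.

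The main obstacle is the last step: pinning down the exact relationship between the modular function of $\gb$ and $\Delta_\gh$. One must match the index $[\hb:\hb\cap g\hb g\inv]$ with the Hecke index $L(g)$ through the completion — this is precisely where Lemma~\ref{lem:bijective-Schlich} and its surrounding discussion, or the corresponding computations in \cite{klq, tzanev}, are needed — and then reconcile the measure-theoretic identity $\mu(g\hb g\inv)=\Delta_{\gb}(g)^{\pm1}\mu(\hb)$ with the paper's right-Haar-measure convention. The ambiguity in the exponent is harmless, since we only need that $\Delta_{\gb}\equiv 1$ implies $\Delta_\gh\equiv 1$. Once this dictionary between $\Delta_{\gb}$ and $\Delta_\gh$ is in place, the corollary follows immediately.
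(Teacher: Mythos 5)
Your proof is correct and follows essentially the same route as the paper's: reduce to the reduced case via Lemma \ref{lem:reduced-RD}, pass to the Schlichting completion, apply Theorem \ref{thm:RD-schlich} to transfer (RD) to $\gb$, and invoke Theorem 2.2 of \cite{JiSch} to get unimodularity. The only difference is cosmetic: the paper argues contrapositively and simply asserts that $\Delta_\gh\neq 1$ forces $\gb$ to be non-unimodular, whereas you supply the (correct) verification of that dictionary, namely that $\Delta_{\gb}$ restricted to $G$ agrees with $\Delta_\gh^{\pm 1}$ via Haar measures of conjugates of the compact open subgroup $\hb$ and Lemma \ref{lem:bijective-Schlich}.
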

\begin{proof}
Using Lemma \ref{lem:reduced-RD}, without loss of generality, we can restrict our attention to the case that $\gh$ is a reduced discrete Hecke pair. If $\D_\gh\neq 1$, then the locally compact group $\gb$ in the Schlichting completion of $\gh$ is not unimodular and therefore cannot have (RD) by Theorem 2.2 of \cite{JiSch}. It follows from this and Theorem \ref{thm:RD-schlich} that the discrete Hecke pair $\gh$ cannot possess (RD).
\end{proof}
\begin{example}
\label{exa:BC-HP-notRD}
Consider the Bost-Connes Hecke pair $\gh$;
\[
G=\left\{ \left( \begin{array} {rr}1&b\\0&a \end{array}\right); a\in \q^+, b\in \q \right\}, \quad \text{and} \quad H=\left\{ \left( \begin{array} {rr}1&n\\0&1
\end{array}\right); n\in \z \right\}.
\]
Since it is not relatively unimodular, see for instance 2.1.1.3 of \cite{hall}, it does not have property (RD). The same statement holds for all Hecke pairs appearing in the context of quantum dynamical systems associated with Hecke pairs which imitate the Bost Connes construction in \cite{bc}. This is due to the fact that those dynamical systems are built on the relative modular functions of the underlying Hecke pairs which are required to be non-trivial.
\end{example}
As another application of Theorem \ref{thm:RD-schlich}, we improve Corollary 3.7 of \cite{s2} as follows:

\begin{proposition}
\label{prop:f-ext-dis} Let $\gh$ be a discrete Hecke pair and let $\Gamma$ be a group containing $G$ as a closed finite index subgroup such that $(\Gamma, H)$ is a discrete Hecke pair too. Then the Hecke pair $\gh$  has (RD) if and only if the Hecke pair $(\Gamma,H)$ has (RD).
\end{proposition}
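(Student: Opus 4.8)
The plan is to transport the whole statement through the Schlichting completion, so that it becomes a permanence property of (RD) for an open finite-index subgroup of a totally disconnected locally compact group. To make the completion available, put $N:=\bigcap_{\gamma\in\Gamma}\gamma H\gamma\inv$, the core of $H$ in $\Gamma$; it is normal in $\Gamma$ (hence in $G$) and contained in $H$. By Lemma \ref{lem:reduced-RD}, $\gh$ has (RD) iff $(G/N,H/N)$ does and $(\Gamma,H)$ has (RD) iff $(\Gamma/N,H/N)$ does, so after replacing $\Gamma,G,H$ by their quotients by $N$ I may assume $(\Gamma,H)$ is reduced, while still $H\subseteq G\subseteq\Gamma$ and $[\Gamma:G]<\infty$. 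Now form the Schlichting completion $(\overline\Gamma,\hb)$ of $(\Gamma,H)$, with $\hb$ a compact open subgroup of the totally disconnected locally compact group $\overline\Gamma$ and $\Gamma$ dense in $\overline\Gamma$, and let $\overline{G}^{\Gamma}$ be the closure of $G$ in $\overline\Gamma$. Since $\hb\subseteq\overline{G}^{\Gamma}$, the subgroup $\overline{G}^{\Gamma}$ is open; and writing $\Gamma=\bigcup_{i=1}^{n}\gamma_i G$ with $n=[\Gamma:G]$ and taking closures gives $\overline\Gamma=\bigcup_{i=1}^{n}\gamma_i\,\overline{G}^{\Gamma}$, so $\overline{G}^{\Gamma}$ is a closed, open, finite-index subgroup of $\overline\Gamma$.

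Using the correspondences recorded after Lemma \ref{lem:bijective-Schlich} (applied to $H\subseteq G\subseteq\Gamma$), $\hb$ is a compact open subgroup of $\overline{G}^{\Gamma}$ with $\hb\ba\overline{G}^{\Gamma}\cong H\ba G$ and $\overline{G}^{\Gamma}//\hb\cong G//H$. Reasoning as in Remark \ref{rem:len-schlich-reduced}(ii), length functions on $\gh$ and on $(\overline{G}^{\Gamma},\hb)$ correspond while preserving local boundedness and properness, so $\gh$ has (RD) iff $(\overline{G}^{\Gamma},\hb)$ has (RD), which in turn holds iff $\overline{G}^{\Gamma}$ has (RD) by Theorem \ref{thm:compact-sub-RD}; one may also argue this rigorously by identifying $\overline{G}^{\Gamma}$ as an extension by a compact normal subgroup of the Schlichting completion of the reduction of $\gh$ and invoking Corollary \ref{cor:subcompactnormal}. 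Likewise $(\Gamma,H)$ has (RD) iff $\overline\Gamma$ has (RD) by Theorem \ref{thm:RD-schlich}. The proposition therefore reduces to the single group-theoretic statement
\begin{equation}
\label{eqn:prop-fext-core}
\overline{G}^{\Gamma}\ \text{has (RD)}\qquad\Longleftrightarrow\qquad\overline\Gamma\ \text{has (RD)}.
\end{equation}

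The hard part will be (\ref{eqn:prop-fext-core}): the permanence of (RD) under open finite-index subgroups of a locally compact group, with no compact-generation hypothesis at hand. Since $\overline{G}^{\Gamma}$ need not be normal in $\overline\Gamma$, I would first pass to the core $G_0:=\bigcap_{i}\gamma_i\,\overline{G}^{\Gamma}\gamma_i\inv$, an open finite-index normal subgroup of $\overline\Gamma$ contained in $\overline{G}^{\Gamma}$, so that it suffices to compare $G_0$ with each of $\overline\Gamma$ and $\overline{G}^{\Gamma}$. For the descending direction, for $f$ supported in the open subgroup $G_0$ the subspace $L^2(G_0)\subseteq L^2(\overline\Gamma)$ is $\la(f)$-invariant and $\la(f)|_{L^2(G_0)}=\la_{G_0}(f)$, whence $\|\la_{G_0}(f)\|\leq\|\la_{\overline\Gamma}(f)\|$; as the restriction of a locally bounded length function stays locally bounded and the weighted $L^2$-norms agree, (RD) of $\overline\Gamma$ forces (RD) of $G_0$, and identically (RD) of $\overline{G}^{\Gamma}$ forces (RD) of $G_0$. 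For the ascending direction I would decompose $f\in C_c(\overline\Gamma)$ as $\sum_i f_i$ with $f_i$ supported on $\gamma_i G_0$, write $f_i=\delta_{\gamma_i}\ast g_i$ with $g_i\in C_c(G_0)$, and use normality of $G_0$ to identify $\|\la(f_i)\|$ with $\max_k\|\la_{G_0}(g_i^{\gamma_k})\|$ up to a bounded modular factor; summing the (RD) estimates for $G_0$ over the finitely many blocks then yields an (RD) estimate for $\overline\Gamma$, and symmetrically for $\overline{G}^{\Gamma}$.

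I expect the genuine obstacle to lie in the ascending direction, precisely in manufacturing a single locally bounded length function on $\overline\Gamma$ out of a given one $l$ on $G_0$ that controls all the conjugates $g_i^{\gamma_k}$ simultaneously. The natural device is to replace $l$ by its symmetrization $z\mapsto\sum_i l(\gamma_i z\gamma_i\inv)$ over coset representatives --- which dominates $l$, so (RD) of $G_0$ is preserved by Remark \ref{rem:equi-length-RD} --- and to add the bounded contribution coming from the finite quotient $\overline\Gamma/G_0$. The delicate steps are checking that the resulting extended function is genuinely subadditive and still locally bounded, and keeping track of the modular factors; these are routine but must be handled with care, and it is exactly the non-normality of $\overline{G}^{\Gamma}$ that makes the detour through $G_0$ necessary.
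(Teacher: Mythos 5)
Your proposal is correct, and its skeleton coincides with the paper's: quotient by the $\Gamma$-core of $H$ via Lemma \ref{lem:reduced-RD}, pass to the Schlichting completion $(\overline{\Gamma},\hb)$ and to the closure $\gb^{\Gamma}$ of $G$, use the correspondences recorded after Lemma \ref{lem:bijective-Schlich} together with the argument of Theorem \ref{thm:RD-schlich} to identify (RD) for $\gh$ with (RD) for $(\gb^{\Gamma},\hb)$, then Theorems \ref{thm:compact-sub-RD} and \ref{thm:RD-schlich} to reduce everything to the statement that $\gb^{\Gamma}$ has (RD) if and only if $\overline{\Gamma}$ does. The genuine divergence is in how that last, group-level step is treated. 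The paper splits it and cites: the implication ``$(\Gamma,H)$ has (RD) $\Rightarrow \gh$ has (RD)'' is quoted from Proposition 2.11 of \cite{s2}, and the converse is outsourced to Lemma 3.3 of \cite{chatpitsaloff}, accompanied only by the unproved remark that the compact-generation hypothesis of that lemma can be dropped. You instead prove the finite-index permanence of (RD) for locally compact groups directly --- descent by restricting the regular representation to an open subgroup, ascent through the open normal core $G_0$ by splitting $f$ over the finitely many cosets and translating each block into $G_0$. This costs more work but makes the whole proposition self-contained, removes the dependence on \cite{s2}, and substantiates precisely the point the paper glosses over; the paper's route is shorter but rests on a citation whose stated hypotheses do not literally cover the situation at hand.

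Two corrections to your sketch of the ascent, neither fatal. First, the operator-norm identification needs no conjugates and no modular factors: once $G_0$ has (RD) it is unimodular by Theorem 2.2 of \cite{JiSch}, and then $\overline{\Gamma}$ is unimodular because its modular function factors through the finite quotient $\overline{\Gamma}/G_0$; hence left translation by $\gamma_i$ is a unitary of $L^2(\overline{\Gamma})$ and $\|\la_{\overline{\Gamma}}(f_i)\|=\|\la_{G_0}(g_i)\|$ exactly. The conjugates $g_i^{\gamma_k}$ enter only when you compare the weighted $L^2$-norm of $g_i$ (taken with respect to a length on $G_0$) with that of $f$ (taken with respect to the extended length on $\overline{\Gamma}$). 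Second, your extension of the symmetrized length $l^{sym}(z)=\sum_i l(\gamma_i z\gamma_i\inv)$ by constants on the cosets is subadditive only up to a bounded defect: one gets $l^{sym}(\gamma_j\inv z\gamma_j)\leq l^{sym}(z)+c$, with $c$ controlled by the finitely many elements $\gamma_m\inv\gamma_i\gamma_j\in G_0$, not equality. This is harmless --- a symmetric Borel function satisfying $L(xy)\leq L(x)+L(y)+D$ becomes a genuine length function after adding $D$ to its values off the identity, and domination (hence (RD), by Remark \ref{rem:equi-length-RD}) is unaffected --- but it is exactly the detail your final paragraph defers, so it should be written out rather than called routine.
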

\begin{proof}
If the Hecke pair $(\Gamma, H)$ has (RD), then by Proposition 2.11 of \cite{s2}, the Hecke pair $\gh$ has (RD) too.

Conversely, suppose that the Hecke pair $\gh$ has (RD). By Lemma \ref{lem:reduced-RD}, without loss of generality, we can assume that the Hecke pair $(\Gamma,H)$ is reduced. Let $(\overline{\Gamma}, \hb)$ be the Schlichting completion of $(\Gamma,H)$ and let $\gb^{\Gamma}$ be the closure of $G$ in $\overline{\Gamma}$. By the same argument as the proof of Theorem \ref{thm:RD-schlich} and using Lemma \ref{lem:bijective-Schlich}, we deduce that the Hecke pair $(\gb^{\Gamma}, \hb)$ has (RD). Since $\hb$ is a compact subgroup of $\gb^\Gamma$, by Theorem \ref{thm:compact-sub-RD}, the locally compact group $\gb^\Gamma$ has (RD). Since $\gb^\Gamma$is a closed finite index subgroup of $\overline{\Gamma}$, by Lemma 3.3 of \cite{chatpitsaloff}, the locally compact group $\overline{\Gamma}$ has (RD). Therefore by Theorem \ref{thm:RD-schlich}, the Hecke pair $(\Gamma,H)$ has (RD).
\end{proof}
There are two points about the above proposition: First, Lemma 3.3 of \cite{chatpitsaloff} is still true without assuming that $G$ is a compactly generated group. Secondly, it is necessary to assume that $(\Gamma,H)$ is a Hecke pair, see the following example:
\begin{example}
\label{exa:find-nothp} Consider the direct product $G=H\times K$ of two copies of $\z$ and assume that $t$ and $s$ are generators of $H$ and $K$, respectively. Let $\Gamma$ be a torsion version of HNN extension of $G$ as follows:
\[
\Gamma:=\langle t,s,u; utu\inv=s, u^2=e, st=ts \rangle.
\]
Clearly, $\gh$ is a Hecke pair and $\Gamma$ contains $G$ as a finite index subgroup, but one easily observes that $H$ is not a Hecke subgroup of $\Gamma$.
\end{example}
Now, we are ready to illustrate the application of our results in proving property (RD) for specific discrete Hecke pairs.
\begin{example}
\label{exa:sl2-z1p} It is proved in Th\'{e}or\`{e}me 2(3) of \cite{jv} that if $G$ is a unimodular locally compact group acting properly on a locally finite tree $\xx$ with finite quotients, then $G$ has property (RD). A famous example for this type of groups is the group $SL_2(\q_p)$. Thus by theorem \ref{thm:compact-sub-RD} and Lemma \ref{lem:reduced-RD}, the Hecke pair $(PSL_2(\q_p), PSL_2(\z_p))$ has property (RD). Then it follows from Theorem \ref{thm:RD-schlich} that the Hecke pair $(PSL_2(\z[1/p]), PSL_2(\z))$ has (RD), see Example \ref{exa:SL2zp}.
\end{example}

In order to investigate more examples of Hecke pairs possessing property (RD), we mainly use Theorems \ref{thm:compact-sub-RD} and \ref{thm:RD-schlich}, the main result of I. Chatterji, C. Pittet and L. Saloff-Coste in \cite{chatpitsaloff}, and the main result S. Mustapha in \cite{mustapha}. These latter results are summarized in the following remark:

\begin{remark}
\label{rem:RD-groups}
\begin{itemize}
\item [(i)] Let $G$ be a connected Lie group. Let $\widetilde{G}$ and $\mathfrak{g}$ denote the universal cover of $G$ and the Lie algebra of $G$, respectively. Then the main result (Theorem 0.1) of \cite{chatpitsaloff} asserts that the following statements are equivalent:
    \begin{itemize}
\item [(a)] The group $G$ has (RD).
\item [(b)] The Lie algebra $\mathfrak{g}$ is a direct product of a semisimple Lie algebra and a Lie algebra of type $R$.
\item [(c)] The Lie group $\widetilde{G}$ is a direct product of a connected semisimple Lie group and a Lie group of polynomial growth.
    \end{itemize}
    We recall that a Lie algebra is called of {\it type R} if all the weights of its adjoint representation are purely imaginary. Then it is known that a Lie algebra is of type R if and only if its associated Lie group is of polynomial growth, see \cite{guiv, jen}. This explains the equivalence of Conditions (b) and (c) in the above. Also, for the similar statement about polynomial growth of $p$-adic Lie groups and several other equivalent conditions see \cite{raja}.
\item [(ii)] Let $F$ be a local field of characteristic $0$ and let $G$ be an algebraic group over $F$. Assume $G$ and its radical are both compactly generated. Then $G$ has property (RD) if and only if $G$ is a reductive group. For a more general statement see Theorem 1 of \cite{mustapha}. We note that it was also shown in Theorem 4.5 of \cite{chatpitsaloff} that every semisimple linear algebraic group on a local field has property (RD).
\end{itemize}
\end{remark}

\begin{example}
\label{exa:sl2-RD}
The above discussion applies to the semisimple Lie group $SL_2(\r)$, and therefore the Hecke pair $(SL_2(\r), SO_2(\r))$ discussed in \cite{s4} has property (RD).
\end{example}

Using Proposition \ref{prop:normalcomponent} and Theorem \ref{thm:compact-sub-RD}, a locally compact Hecke pair $\gh$ has property (RD) provided that there exists a normal subgroup $K$ of $G$ such that $K\subseteq H$, $H/K$ is compact and $G/K$ is of one of the forms described in the above remark. This is particularly useful to find pro-Lie groups with property (RD). In the next subsection, we expand on the notion of growth of a Hecke pair.

\section{Growth, amenability and property (RD)}
\label{sec:growthRD}

In addition to compactly (finitely) generated groups, we might also deal with not necessarily compactly (possibly infinitely) generated groups and Hecke pairs associated with them. Thus we need to extend some notions from the setting of compactly generated groups to this class of groups and Hecke pairs. To this end, we have to study the growth rates of groups with respect to arbitrary locally bounded length functions (not just length functions associated with compact generating sets). Another motivation for our general approach is that word length functions are not well defined on compactly generated Hecke pairs.

\begin{definition} Let $\gh$ be a Hecke pair and let $l$ be a locally bounded length function on $\gh$.
\begin{itemize}
\item [(i)] The {\it growth function associated with $l$} is the function $\G_l:[0,\infty[ \ra [0,\infty]$ defined by $\G_l(r):=\nu (B_{r,l}\gh)$ for all $r\geq 0$.
\item [(ii)] We say that $\gh$ has {\it infinite growth \wrt $l$} if $\G_l(r)=\infty$ for some $r\geq 0$, otherwise we say that $\gh$ has a {\it finite growth  \wrt $l$}.
\item [(iii)] We say that $\gh$ is of {\it polynomial growth \wrt $l$} if there are two positive constants $c, \alpha$ such that $\G_l(r)\leq cr^\alpha$ for all large enough real numbers $r$. In this case we also say that the {\it degree of the growth of $\gh$} is at most $\alpha$.
\item [(iv)] We say that $\gh$ is of {\it superpolynomial growth \wrt $l$} if its growth is faster than any polynomial, more precisely, if the limit $\lim_{r\to \infty}\frac{\ln \G_l(r)}{\ln r}$ exists and equals $\infty$.
\item [(v)] We say that $\gh$ is of {\it exponential growth \wrt $l$} if it has a finite growth and if there are two positive constants $d, \beta$ such that $\G_l(r)\geq d\beta^r$ for all sufficiently large real numbers $r$.
\item [(vi)] We say that $\gh$ is of {\it subexponential growth \wrt $l$} if it has a finite growth \wrt $l$, but it is not of exponential growth \wrt $l$. It is equivalent to the condition that $\lim_{r\to \infty}\frac{\ln \G_l(r)}{ r}=0$.
\item [(vii)] We say that $\gh$ is of {\it intermediate growth \wrt $l$} if it is of superpolynomial and subexponential growth \wrt $l$, simultaneously.
\end{itemize}
\end{definition}

There are several points regarding the above definition: When a Hecke pair $\gh$ is of polynomial (resp. intermediate or exponential) growth with respect to some length function $l$, we may also say that $\gh$ has {\it polynomial} (resp. {\it intermediate} or {\it exponential}) {\it growth rate} with respect to $l$. When $\gh$ is a discrete Hecke pair $\nu$ is a multiple of the counting measure. Assuming $H=\{e\}$, the above definitions reduce to the well known definitions of various growth rates for locally compact groups. It is clear that equivalent length functions give rise to the same growth rate. Therefore, when a group $G$ is a compactly (or finitely) generated group, we usually consider the growth rate of $G$ with respect to the word length function associated with a compact generating set and do not mention the length function. Similar to amenability and property (RD), (see \cite{tzanev, s3} for amenability and see Remark \ref{rem:len-schlich-reduced}, Theorem \ref{thm:RD-schlich} and Lemma \ref{lem:reduced-RD} for property (RD)), certain operations on Hecke pairs do not change the growth rate. Here, we only mention them for later references and skip routine proofs.

\begin{lemma}
\label{lem:growth-normal-component} Let $\gh$ be a Hecke pair equipped with a length function $l$. Let $N$, $G'$ and $H'$ be as in Proposition \ref{prop:normalcomponent} and let $l'$ be the length function defined on the Hecke pair $\ghpr$ using $l$, see Remark \ref{rem:len-schlich-reduced}(i). Then the Hecke pairs $\gh$ and $\ghpr$ have the same growth rate with respect to $l$ and $l'$, respectively.
\end{lemma}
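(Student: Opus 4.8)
The plan is to reduce everything to a single observation: the natural bijection between the homogeneous spaces $H\ba G$ and $H'\ba G'$ intertwines the length functions $l$ and $l'$ and carries the invariant measure $\nu$ to a positive multiple of $\nu'$. Consequently the two growth functions differ only by a fixed multiplicative constant, and each of the seven growth notions in the definition is invariant under such a scaling.

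First I would make the identification of homogeneous spaces explicit. Let $q:G\to G'=G/N$ be the quotient map and define $\alpha:H\ba G\to H'\ba G'$ by $\alpha(Hg):=H'q(g)$. Since $N\subseteq H$ is normal, one has $HgN=Hg$ for every $g\in G$, so the composite $G\to G/N\to (H/N)\ba(G/N)$ has exactly the right $H$-cosets as its fibres; this shows $\alpha$ is a well-defined $G$-equivariant homeomorphism, where the right $G$-action on $H\ba G$ corresponds through $q$ to the right $G'$-action on $H'\ba G'$. This is precisely the bijection underlying the isomorphism $\H\gh\cong\H(G',H')$ of Proposition \ref{prop:normalcomponent}.

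Next I would compare the measures. Both $\nu$ and $\nu'$ are right invariant measures on the homogeneous spaces $H\ba G$ and $H'\ba G'$, so the push-forward $\alpha_\ast\nu$ is again a right $G'$-invariant measure on $H'\ba G'$; by uniqueness of the invariant measure up to a positive scalar, $\alpha_\ast\nu=c\,\nu'$ for some $c>0$ (in the discrete case both measures are multiples of the counting measure, so this is immediate). By the definition of $l'$ in Remark \ref{rem:len-schlich-reduced}(i) we have $l'(q(g))=l(g)$ for all $g\in G$, whence $\alpha(B_{r,l}\gh)=B_{r,l'}\ghpr$ for every $r\ge 0$. Combining these facts gives
\[
\G_{l'}(r)=\nu'\big(B_{r,l'}\ghpr\big)=\nu'\big(\alpha(B_{r,l}\gh)\big)=c^{-1}\nu(B_{r,l}\gh)=c^{-1}\G_l(r),
\]
so the two growth functions agree up to the fixed positive constant $c^{-1}$.

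Finally I would check that agreement up to a positive multiplicative constant preserves each growth notion, and this routine verification is where the only remaining bookkeeping lies. Finiteness of growth, the polynomial bound $\G_l(r)\le cr^\alpha$, and the exponential lower bound $\G_l(r)\ge d\beta^r$ are visibly stable under scaling; for the remaining notions one observes $\frac{\ln(c^{-1}\G_l(r))}{\ln r}=\frac{\ln c^{-1}}{\ln r}+\frac{\ln\G_l(r)}{\ln r}$, and similarly with $r$ in the denominator, so the added term vanishes in the limit and superpolynomial, subexponential and intermediate growth are preserved as well. Hence $\gh$ and $\ghpr$ have the same growth rate. The one conceptual point requiring care is the measure identification up to a scalar; once the homogeneous spaces are identified as above and the uniqueness of invariant measures is invoked, everything else is formal.
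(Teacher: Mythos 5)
Your proof is correct. The paper itself gives no argument for this lemma (it is stated with the explicit comment that the author will ``skip routine proofs''), and your argument --- the $G$-equivariant identification $H\backslash G \cong H'\backslash G'$, uniqueness of the invariant measure up to a positive scalar, compatibility $l'(gN)=l(g)$ so that balls correspond to balls, and the observation that every growth notion in the definition is invariant under multiplying the growth function by a fixed positive constant --- is precisely the routine verification the paper intends, so there is nothing to compare against and no gap to report.
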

The special case of the above lemma that $\gh$ is a discrete Hecke pair and $N=K_\gh$ is particularly important in the study of growth rate and amenability of discrete Hecke pairs.
\begin{proposition}
\label{prop:growth-schlich} Let $\gh$ be a reduced discrete Hecke pair. Let $l$ be a length function on $\gh$ and let $\bar{l}$ be the corresponding length function on its Schlichting completion $\ghb$ defined using $l$, see Remark \ref{rem:len-schlich-reduced}(ii). Then the discrete Hecke pairs $\gh$ and $\ghb$ have the same growth rate with respect to the length functions $l$ and $\bar{l}$, respectively.
\end{proposition}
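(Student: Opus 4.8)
The plan is to exploit the $G$-equivariant bijection $\alpha:\hg\to \hb\ba\gb$ of Lemma \ref{lem:bijective-Schlich}(i) together with the compatibility between $l$ and $\bar l$ that is built into the construction in Remark \ref{rem:len-schlich-reduced}(ii). Recall from that remark that $\bar l$ is defined by $\bar l(x):=l(g_x)$, where $g_x\in G$ satisfies $\hb x=\hb g_x$; in particular $\bar l(g)=l(g)$ for every $g\in G$, and $\bar l$ is bi-$\hb$-invariant. Consequently, for $Hg\in\hg$ we have $\bar l(\alpha(Hg))=\bar l(\hb g)=l(g)$, so $\alpha$ intertwines the two length functions in the obvious way.

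First I would check that for each $r\geq 0$ the map $\alpha$ restricts to a bijection between the balls $B_{r,l}\gh$ and $B_{r,\bar l}\ghb$. Indeed, $Hg\in B_{r,l}\gh$ means $l(g)\leq r$, which by the identity above is equivalent to $\bar l(\hb g)\leq r$, i.e. $\alpha(Hg)=\hb g\in B_{r,\bar l}\ghb$; since every $\hb$-coset in $\gb$ is of the form $\hb g$ with $g\in G$ (again Lemma \ref{lem:bijective-Schlich}(i)), $\alpha$ maps onto the ball, and it is injective as the restriction of a bijection. Hence $|B_{r,l}\gh|=|B_{r,\bar l}\ghb|$ for every $r$.

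Next I would turn this equality of cardinalities into an equality of the growth functions up to a positive constant. Both $\gh$ and $\ghb$ are discrete Hecke pairs, so the right-invariant measures $\nu$ on $\hg$ and $\bar\nu$ on $\hb\ba\gb$ are multiples of the respective counting measures, say $\nu(B_{r,l}\gh)=c_1|B_{r,l}\gh|$ and $\bar\nu(B_{r,\bar l}\ghb)=c_2|B_{r,\bar l}\ghb|$ with $c_1,c_2>0$. Combining this with the previous step gives
\[
\G_l(r)=c_1\,|B_{r,l}\gh|=c_1\,|B_{r,\bar l}\ghb|=\frac{c_1}{c_2}\,\G_{\bar l}(r),\qquad \forall r\geq 0.
\]

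Finally, I would invoke the fact that all the growth rates in the preceding definition are asymptotic notions insensitive to multiplication of the growth function by a fixed positive constant (exactly as equivalent length functions give the same growth rate). Since $\G_l$ and $\G_{\bar l}$ differ only by the factor $c_1/c_2$, the two pairs share the same growth rate, which is the assertion. I do not expect a serious obstacle here: the only point requiring a little care is the bookkeeping of the normalization constants $c_1,c_2$, but because growth rate ignores multiplicative constants this never truly enters, and the whole argument reduces to the cardinality-preserving bijection $\alpha$ supplied by the Schlichting completion.
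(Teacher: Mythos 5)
Your proof is correct, and it is exactly the ``routine'' argument the paper has in mind: the paper explicitly skips the proofs of Lemma \ref{lem:growth-normal-component} and Proposition \ref{prop:growth-schlich}, and the intended route is precisely the one you take, namely the ball-preserving bijection $\alpha$ of Lemma \ref{lem:bijective-Schlich}(i) combined with the compatibility $\bar{l}|_G=l$ from Remark \ref{rem:len-schlich-reduced}(ii) and the observation that on discrete homogeneous spaces the invariant measures are constant multiples of counting measure, which growth rates ignore.
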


Using the above results, we obtain a criterion for amenability of discrete Hecke pairs in the next proposition. First we recall the definition of amenability of pairs $\gh$ based on \cite{eymard}, see also \cite{tzanev, anan, s3} for more details and results concerning amenability of Hecke pairs.

\begin{definition}
\label{def:amen-pair}
Let $H$ be a closed subgroup of a locally compact group $G$. The pair $\gh$ is called amenable if it possesses the fixed point property, that is whenever $G$ acts continuously on a compact convex subset $Q$ of a locally convex topological vector space by affine transformations and the restriction of this action to $H$ has a fixed point, the action of $G$ has a fixed point too.
\end{definition}

\begin{proposition}
\label{prop:growthamen}
Let $\gh$ be a discrete Hecke pair which has a finite generating set $S$. If the Hecke pair $\gh$ is of subexponential growth, then it is amenable.
\end{proposition}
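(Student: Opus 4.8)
The plan is to reduce the statement, via the Schlichting completion, to the classical fact that a compactly generated locally compact group of subexponential growth is amenable, and then to transport amenability back to $\gh$. First I would dispose of the passage to the reduced case. Finite generation, growth rate, and amenability are all unchanged when we pass from $\gh$ to its reduction $\ghr$ (finite generation by Remark \ref{rem:len-schlich-reduced}(i), growth by the special case $N=K_\gh$ of Lemma \ref{lem:growth-normal-component}, and amenability as in \cite{tzanev, s3}), so I may assume without loss of generality that $\gh$ is reduced. Then I pass to the Schlichting completion $\ghb$: by the final paragraph of Remark \ref{rem:len-schlich-reduced}(ii), $\hb S$ is a compact generating set of $\gb$, hence $\gb$ is compactly generated, and by Proposition \ref{prop:growth-schlich} the pair $\ghb$ has subexponential growth with respect to the transported length function $\bar l$ (here $l$ is a proper locally bounded length function on $\gh$, which exists by Lemma \ref{lem:lengthexist} and is unique up to equivalence, so the growth rate is unambiguous).

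Next I would translate the subexponential growth of the pair $\ghb$ into subexponential growth of the group $\gb$. Because $\hb$ is compact open and $\bar l$ is bi-$\hb$-invariant, the ball $\{g\in\gb;\ \bar l(g)\le r\}$ is a union of right $\hb$-cosets, so its right Haar measure equals $\eta(\hb)$ times the number of such cosets, i.e. $\mu(\{\bar l\le r\}) = \eta(\hb)\,\nu(B_{r,\bar l}\ghb)$ up to the fixed normalization of $\nu$. The group growth function and the pair growth function therefore differ only by a multiplicative constant, and $\bar l$ is equivalent to the word length function associated with $\hb S$; hence $\gb$ has subexponential growth as a locally compact group.

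The heart of the argument is then the fact that a compactly generated locally compact group of subexponential growth is amenable, which I would prove by exhibiting balls as a F\o lner net. Choosing a compact symmetric generating neighborhood $V$ of $e$ (Remark \ref{rem-lengths}(ii)), one has $gV^n\subseteq V^{n+1}$ and $V^{n-1}\subseteq gV^n$ for every $g\in V$, whence $\mu(gV^n\triangle V^n)\le \mu(V^{n+1})-\mu(V^{n-1})$. Subexponential growth forces $\liminf_n \mu(V^{n+1})/\mu(V^{n-1})=1$, for otherwise a bound $\mu(V^{n+1})\ge c\,\mu(V^{n-1})$ with $c>1$ holding for all large $n$ would, upon iteration, yield exponential growth; along a subsequence realizing this $\liminf$, the sets $V^n$ satisfy the F\o lner condition, so $\gb$ is amenable. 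This F\o lner-subsequence step, together with the care needed to rule out exponential growth, is where I expect the main work to lie.

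Finally I would transport amenability back. An amenable locally compact group has the fixed point property for all of its continuous affine actions, so whenever $\hb$ fixes a point the whole group $\gb$ does; thus the pair $\ghb$ is amenable in the sense of Definition \ref{def:amen-pair}. By the Schlichting correspondence for amenability (Proposition 5.1 of \cite{tzanev}; see also \cite{s3}), the reduced pair $\gh$ is amenable, and undoing the initial reduction to $\ghr$ completes the proof.
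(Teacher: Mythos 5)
Your proposal follows essentially the same route as the paper's proof: pass to the reduction, then to the Schlichting completion $\ghb$, note that $\gb$ is compactly generated, convert subexponential growth of the pair into subexponential growth of the group, invoke the fact that compactly generated locally compact groups of subexponential growth are amenable, and transport amenability back to $\gh$ via Proposition 5.1 of \cite{tzanev}. The one real difference is that the paper simply cites \cite{guiv} for the classical amenability fact, while you prove it with the standard F\o lner argument on the balls $V^n$; that argument is correct as written (the inclusion $gV^n \triangle V^n \subseteq V^{n+1}\setminus V^{n-1}$ needs no invariance of $\mu$, and using left translates in the F\o lner criterion is legitimate here because groups of subexponential growth are unimodular, cf. Lemma I.3 of \cite{guiv}). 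So your proof is self-contained where the paper's is not; otherwise the structure is identical.

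One side assertion you make is false, although it does not damage the proof. Proper locally bounded length functions are \emph{not} unique up to equivalence, and $\bar{l}$ need not be equivalent to the word length function of $\hb S$: on $G=\z$, both $l_1(n)=|n|$ and $l_2(n)=\log(1+|n|)$ are proper, locally bounded length functions, but $l_1$ is not dominated by $l_2$, so they are inequivalent. What is true --- and is all your argument actually uses --- is the one-sided domination of Lemma \ref{lem:len-geom1}(ii): the word length function associated with the compact generating set $\hb S$ dominates the locally bounded length function $\bar{l}$, hence every word-length ball is contained in a linearly rescaled $\bar{l}$-ball, and subexponential growth with respect to $\bar{l}$ (equivalently, of the pair) forces subexponential growth with respect to the word length. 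Replacing ``equivalent'' and ``unique up to equivalence'' by this domination statement makes your proof correct and, in effect, the same as the paper's.
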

\begin{proof}
Assume that $\gh$ is of subexponential growth, then so is its reduction $\ghr$. Thus the Schlichting completion $(\overline{G_r},\overline{H_r})$ of the latter Hecke pair is of subexponential growth. Since $\overline{H_r}$ is compact, it follows that $\overline{G_r}$ is of subexponential growth with respect to a length function defined by a compact generating set. It is a well known fact that compactly generated locally compact groups of subexponential growth are amenable, see \cite{guiv}. Therefore $\overline{G_r}$ is amenable. Using Proposition 5.1 of \cite{tzanev}, this implies that the Hecke pair $\gh$ is amenable.
\end{proof}

Due to the fact that every compactly generated locally compact group of subexponential growth is unimodular, see Lemma I.3 of \cite{guiv}, and by applying a similar argument as the above proof, one can prove the following proposition:

\begin{proposition}
\label{prop:growth-rel-unimod}
Let $\gh$ be a discrete Hecke pair which has a finite generating set $S$. If the Hecke pair $\gh$ is of subexponential growth, then it is relatively unimodular.
\end{proposition}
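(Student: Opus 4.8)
The plan is to mirror the proof of Proposition \ref{prop:growthamen} almost verbatim, replacing Guivarc'h's amenability statement by his unimodularity result (Lemma I.3 of \cite{guiv}), and then translating unimodularity of the Schlichting completion back into relative unimodularity of the original Hecke pair. The only genuinely new ingredient is this last translation, and it is already available from the proof of Corollary \ref{cor:unimod-RD}.

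First I would reduce to the reduced case. Since $K_\gh$ is a normal subgroup of $G$ contained in $H$, it satisfies $K_\gh\subseteq H\cap xHx\inv$ for every $x\in G$, so the quotient map induces equalities $[H:H\cap xHx\inv]=[H_r:H_r\cap \bar{x}H_r\bar{x}\inv]$ for all $x$. Hence the functions $L$ and $R$, and with them the relative modular function $\D_\gh$, are unchanged under reduction; in particular $\gh$ is relatively unimodular if and only if its reduction $\ghr$ is. Combining this with Lemma \ref{lem:growth-normal-component} (applied with $N=K_\gh$), which shows the reduction preserves the growth rate, I may assume without loss of generality that $\gh$ is reduced.

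Next I would pass to the Schlichting completion $\ghb$. By Proposition \ref{prop:growth-schlich} it is again of subexponential growth, and since a finite generating set $S$ of $\gh$ yields the compact generating set $\hb S$ of $\gb$ (Remark \ref{rem:len-schlich-reduced}(ii)), the compactness of $\hb$ lets me transfer the subexponential growth of the pair $\ghb$ to subexponential growth of the group $\gb$ with respect to a word length function coming from a compact generating set. This is precisely the transfer step used in the amenability proof. Thus $\gb$ is a compactly generated locally compact group of subexponential growth, and Lemma I.3 of \cite{guiv} forces $\gb$ to be unimodular.

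The final step converts unimodularity of $\gb$ into relative unimodularity of $\gh$. Here I would invoke exactly the relationship exploited in the proof of Corollary \ref{cor:unimod-RD}: the relative modular function $\D_\gh$ corresponds to the modular function of $\gb$ under the embedding of $G$, so $\D_\gh\neq 1$ would make $\gb$ non-unimodular. Contrapositively, the unimodularity of $\gb$ just established yields $\D_\gh=1$, i.e.\ $\gh$ is relatively unimodular. The main obstacle, as in Proposition \ref{prop:growthamen}, is the growth-transfer step between the Hecke pair and its Schlichting completion; but this is handled uniformly by Proposition \ref{prop:growth-schlich} together with the compactness of $\hb$, so no new difficulty arises beyond citing the correct result of \cite{guiv}.
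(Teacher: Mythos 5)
Your proposal is correct and follows essentially the same route the paper intends: reduce to the reduced pair, pass to the Schlichting completion, use Proposition \ref{prop:growth-schlich} and the compactness of $\hb$ to transfer subexponential growth to the compactly generated group $\gb$, invoke Lemma I.3 of \cite{guiv} for unimodularity, and translate back via the identification of $\D_\gh$ with the modular function of $\gb$ as in Corollary \ref{cor:unimod-RD}. The paper itself only sketches this ("by applying a similar argument as the above proof"), and your write-up fills in exactly the intended details, including the observation that $L$ and $R$ are unchanged under reduction.
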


Since, in the setting of finitely generated discrete Hecke pairs, polynomial growth implies property (RD) and property (RD) requires relative unimodularity, the above conclusion can be reached using the next proposition as well. First, we need to recall and use a characterization of amenability.

\begin{remark}
\label{rem:kesten-amen}
A locally compact group $G$ is amenable if and only if for every non-negative real function $f\in L^1(G)$, we have $\|f\|_1=\|\la(f)\|$, see \cite{leptin}. Using this, one can show that every compactly generated amenable group with property (RD) is of polynomial growth. Conversely, the continuity of the left regular representation implies that every compactly generated locally compact group of polynomial growth possesses property (RD). For the proof of these statements see Theorem 1.5 of \cite{chatru}.
\end{remark}

Using the above remark, Theorem \ref{thm:RD-schlich}, Proposition \ref{prop:growth-schlich}, and Remark 6(iv) of \cite{s3}, we obtain the following proposition:

\begin{proposition}
\label{prop:amenRD}
Let $\gh$ be a finitely generated discrete Hecke pair.
\begin{itemize}
\item [(i)] If $\gh$ is of polynomial growth, then it has (RD).
\item [(ii)] If $\gh$ is amenable and possesses property (RD), it is of polynomial growth.
\end{itemize}
\end{proposition}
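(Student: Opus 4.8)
The plan is to transport both statements to the setting of compactly generated locally compact groups via the Schlichting completion, where the corresponding facts are already recorded in Remark \ref{rem:kesten-amen}, and then pull the conclusions back.

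First I would reduce to the reduced case. By Lemma \ref{lem:reduced-RD}, property (RD) of $\gh$ is equivalent to property (RD) of its reduction $\ghr$; by the special case $N=K_\gh$ of Lemma \ref{lem:growth-normal-component}, the growth rates of $\gh$ and $\ghr$ coincide; and amenability is likewise invariant under reduction (see \cite{tzanev, s3}). Moreover, by Remark \ref{rem:len-schlich-reduced}(i) the reduction of a finitely generated Hecke pair remains finitely generated. Hence, without loss of generality, I may assume that $\gh$ is reduced.

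Next I pass to the Schlichting completion $\ghb$, in which $\hb$ is a compact open subgroup of the totally disconnected group $\gb$; since $\gh$ is finitely generated, $\gb$ is compactly generated (Remark \ref{rem:len-schlich-reduced}(ii)). The three transfer principles I need are: (a) $\gh$ has (RD) if and only if $\gb$ has (RD), which is Theorem \ref{thm:RD-schlich}; (b) $\gh$ and $\ghb$ have the same growth rate by Proposition \ref{prop:growth-schlich}, and the growth rate of the Hecke pair $\ghb$ agrees with that of the group $\gb$ because any length function $\bar{l}$ on $\ghb$ is bi-$\hb$-invariant, so $B_{r,\bar{l}}(\gb)$ is a union of cosets of $\hb$ and Weil's formula gives $\mu(B_{r,\bar{l}}(\gb))=\eta(\hb)\,\nu(B_{r,\bar{l}}\ghb)$, whence the two growth functions differ only by the constant factor $\eta(\hb)$; and (c) $\gh$ is amenable if and only if $\gb$ is amenable, which is Remark 6(iv) of \cite{s3} (equivalently Proposition 5.1 of \cite{tzanev}). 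With these identifications, both claims reduce to Remark \ref{rem:kesten-amen} applied to the compactly generated group $\gb$: for (i), polynomial growth of $\gh$ yields polynomial growth of $\gb$, whence $\gb$, and therefore $\gh$, has (RD); for (ii), amenability together with (RD) for $\gh$ transfers to $\gb$, so $\gb$ has polynomial growth, and hence so does $\gh$.

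The main obstacle I anticipate is not a deep step but the bookkeeping in (b): one must check carefully that moving between the growth of the Hecke pair $\ghb$ and the growth of the ambient group $\gb$ introduces only a multiplicative constant, so that the distinction between polynomial and exponential growth is preserved, and that the normalization of the Haar measure $\eta$ on $\hb$ plays no role in the final growth rate. This rests essentially on $\hb$ being simultaneously compact and open, which is exactly the feature guaranteed by the Schlichting completion.
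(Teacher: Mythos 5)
Your proposal follows essentially the same route as the paper: the paper's (one-line) proof cites exactly Remark \ref{rem:kesten-amen}, Theorem \ref{thm:RD-schlich}, Proposition \ref{prop:growth-schlich}, and the amenability transfer from \cite{s3}, i.e.\ transport everything to the compactly generated group $\gb$ via the Schlichting completion and invoke the group-theoretic equivalence there. Your additional bookkeeping --- the reduction to the reduced case and the Weil-formula comparison $\mu(B_{r,\bar{l}}(\gb))=\eta(\hb)\,\nu(B_{r,\bar{l}}\ghb)$ showing the Hecke-pair and group growth rates agree up to a constant --- is correct and merely makes explicit what the paper leaves implicit.
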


One notes that using Theorem \ref{thm:compact-sub-RD} and Theorem 4.1 of \cite{s4}, the above proposition is still valid for non-discrete Hecke pairs $\gh$, where $G$ is a compactly generated locally compact group and $H$ is a compact subgroup of $G$. When $G$ is a unimodular locally compact group and it has a decomposition of the form $G=PK$, where $K$ is compact and $P$ is amenable, a notably stronger result is also available, see Theorem 4.4 and Proposition 4.2 of \cite{chatpitsaloff}.


\bibliographystyle{amsalpha}

\end{document}